\title{Extremal stability for configuration spaces}
\author{Ben Knudsen}\thanks{Ben Knudsen was supported in part by NSF grant DMS-1906174}
\address{Department of Mathematics, Northeastern University, USA}
\email{b.knudsen@northeastern.edu}
\author{Jeremy Miller}\thanks{Jeremy Miller was supported in part by a Simons collaboration grant}
\address{Department of Mathematics, Purdue University, USA}
\email{jeremykmiller@purdue.edu}
\author{Philip Tosteson} \thanks{Philip Tosteson was supported in part by NSF grant  DMS-1903040.}
\address{Department of Mathematics, University of Chicago, Chicago, IL}
\email{{ptoste@math.uchicago.edu}}
 \date{\today}
\begin{document}

\begin{abstract} We study stability patterns in the high dimensional rational homology of unordered configuration spaces of manifolds. Our results follow from a general approach to stability phenomena in the homology of Lie algebras, which may be of independent interest.
\end{abstract}
\maketitle

\section{Introduction}

The purpose of this paper is to investigate a stability phenomenon in the high dimensional rational homology of the unordered configuration spaces of a $d$-manifold $M$.\footnote{Throughout, we use the shorthand ``manifold'' to refer to a topological space homeomorphic to the interior of a compact topological manifold with boundary. Since configuration spaces are isotopy invariant, our results also hold for manifolds with non-empty boundary, \emph{mutatis mutandis}.}  Writing $$\Conf_n(M) = \{(x_1,\ldots,x_n)\in M^n \, |\, x_i \neq x_j \text{ for } i \neq j \}$$ for the ordered configuration space of $n$ points in $M$ and $B_n(M)=\Conf_n(M)/S_n$ for the unordered configuration space, classical results of McDuff, Segal, and Church show that the spaces $B_n(M)$ exhibit \emph{rational homological stability}. We will restrict our attention to the case of even $d$, since there is a simple closed form available for $d$ odd \cite{BCT} (see Section \ref{section:odd case} for further commentary).

\begin{theorem}[Homological stability \cite{Mc1,Se,Ch}]\label{thm:homological stability}
Let $M$ be a manifold of even dimension $d\geq 2$. For each $i\geq0$, there is a polynomial in $n$ of degree at most $\dim H_0(M;\mathbb{Q})-1$, which coincides with $\dim H_i(B_n(M);\mathbb{Q})$ for all $n$ sufficiently large.
\end{theorem}

Traditionally, Theorem \ref{thm:homological stability} is stated in the case that $M$ is connected, where it is the statement that the function $n\mapsto \dim H_i(B_n(M);\mathbb{Q})$ is eventually constant. The general case follows easily from the connected case and the K\"{u}nneth theorem. 

While homological stability is a stability pattern in low homological dimension, extremal stability, our main theorem, is a pattern in low homological \emph{codimension}. Since $H_i(B_n(M))=0$ for $i>\nu_n:=n(d-1)+|\pi_0(M)|$, it is reasonable to think of $H_{\nu_n-i}(B_n(M))$ as the codimension $i$ homology of $B_n(M)$, at least generically.

\begin{theorem}[Extremal stability]\label{thm:extremal stability}
Let $M$ be a manifold of even dimension $d\geq 2$. For each $i\geq0$, there is a quasi-polynomial in $n$ of degree at most $\dim H_{d-1}(M;\mathbb{Q}^w)-1$ and period at most $2$, which coincides with $\dim H_{\nu_n-i}(B_n(M);\mathbb{Q})$ for all $n$ sufficiently large.
\end{theorem}


Here, we have written $\mathbb{Q}^w$ for the orientation sheaf of $M$.  Equivalently, Theorem \ref{thm:extremal stability} states that there are two polynomials $p_{{\rm even}}^i(n)$ and $p_{\rm odd}^i(n)$ governing the codimension $i$ homology of $B_n(M)$ for even and odd $n$, respectively. See \S \ref{section:modules and growth} for our conventions on quasi-polynomials.   

\begin{example}[{\cite{DK}}]\label{example:DK}
For $\Sigma$ a compact, orientable surface of genus $2$ and $n\geq4$, it follows from \cite[Corollary 4.9 and Figure 1]{DK} that \[\dim H_{\nu_n}(B_n(\Sigma);\mathbb{Q})=\begin{cases}
\frac{n^3+4n^2-4n-16}{16}\vspace{.1cm} &\quad n\text{ even}\\ 
\frac{n^3+n^2-n+15}{16}&\quad n \text{ odd}.
\end{cases}\]
This example shows that the degree bound of Theorem \ref{thm:extremal stability} is sharp.
\end{example}

\extremalfigure

Instances of extremal stability for configuration spaces were first noticed by Maguire in the case $M=\bbC P^3$ \cite{Megan}, who proved a stability theorem for manifolds with even-dimensional cohomology. The only other prior example of extremal stability that we are aware of is in the high dimensional cohomology groups of congruence subgroups of $SL_n(\Z)$ \cite{MNP}.

If $H_{d-1}(M, \mathbb{Q}^w ) = 0$, then Theorem \ref{thm:extremal stability} is just the statement that the groups $H_{\nu_n-i}(B_{n}(M), \bbQ)$ eventually vanish. In this situation, $H_{\nu_n-i}(B_{n}(M), \bbQ) $ should not be thought of as the codimension $i$ homology of $B_n(M)$. In Theorem \ref{thm:vanishing}, we prove a more refined stability result for manifolds with vanishing high degree homology groups.

\begin{remark}
As observed by the referee, a quasi-polynomial such as the one referenced in Theorem \ref{thm:extremal stability} may be written in the form $p(n)+(-1)^nq(n)$ for polynomials $p$ and $q$, and, since the polynomials of Example \ref{example:DK} have the same leading term, we have $\deg q(n)<3$ in this example. We do not know whether this improved bound on $q(n)$ is a general phenomenon.  
\end{remark}

\subsection{Stabilization maps} \label{stabmap}

Extremal stability is induced by a family of maps of the form $$H_i(B_n(M);\Q) \to H_{i+2d-2}(B_{n+2}(M);\Q).$$ Heuristically, such a map is obtained as follows (we assume that $M$ is orientable for simplicity). Fixing a class $\alpha \in H_{d-1}(M;\mathbb{Q})$, which we imagine as a $(d-1)$-parameter family of configurations of a single point in $M$, we write $\alpha \otimes [v,v] \in H_{2d-2}(B_2(M);\Q)$ for the class obtained by replacing this single point with a pair of orbiting points. Here, we think of $v$ as the class of a single point in $\mathbb{R}^d$ and $[\cdot ,\cdot ]$ as the Browder bracket in the homology of Euclidean configuration spaces (nontrivial for $d$ even).

We wish to define our stabilization map by superposition with the class $\alpha \otimes [v,v]$; that is, we attempt to send the cycle depicted on the left of Figure \ref{StabilizationPic} to the cycle depicted on the right. Unfortunately, this cycle does not lie in the configuration space, since it contains configurations of non-distinct particles. 

\begin{figure}[!ht] \begin{center}\scalebox{.2}{\includegraphics{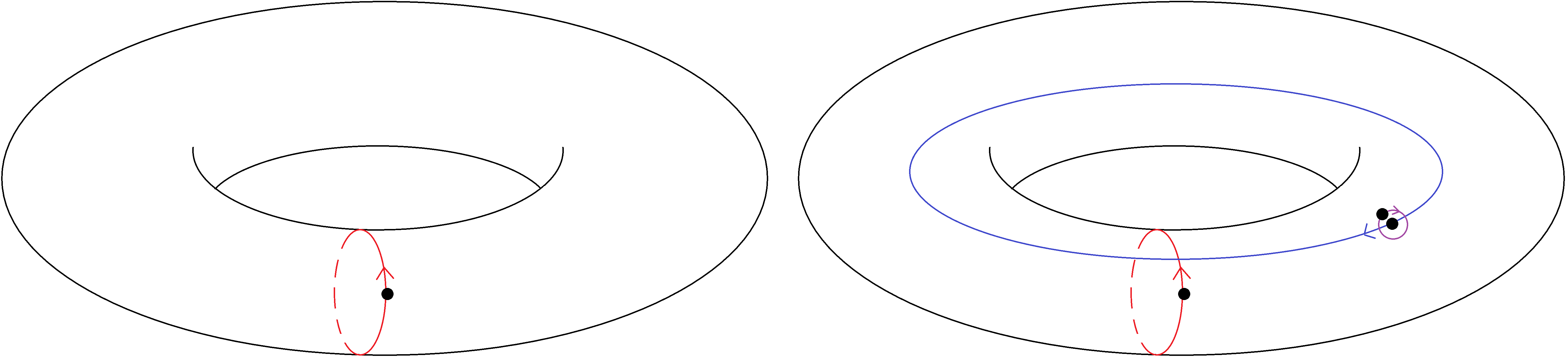}}\end{center}

\caption{Extremal stabilization map}

\label{StabilizationPic}
\end{figure}  

Heuristically, this issue can be resolved in the example depicted (where $i=1$) by performing surgery in a Euclidean neighborhood of the intersection point, resulting in a $(2d-1)$-chain in $B_3(M)$ whose boundary corresponds to the image of the nested Browder bracket $[v,[v,v]] \in H_{2d-2}(B_3(\R^d);\Q)$ under the coordinate embedding $\mathbb{R}^d\subseteq M$. Since $[v,[v,v]]=0$ by the Jacobi relation, we may choose a bounding chain, removing the point of intersection.

We now sketch a more categorical approach to this idea. The desired map is a special case of a map 
 \[H_i(B_n(M)) \otimes H_j(B_m(M)) \to H_{n(d-1)+i+j}(B_{2n+m}(M))\] that replaces points in $B_n(M)$ with pairs of orbiting points and takes the union with the configuration in $B_m(M)$. Such a map can be obtained by applying factorization homology \cite{AFTop} to an appropriate chain level map of $E_d$-algebras in the case $M=\mathbb{R}^d$. Setting $B(\mathbb{R}^d)=\bigsqcup_{n\geq0} B_n(\mathbb{R}^d)$, the tensor product $C_*(B(\mathbb{R}^d))\otimes C_*(B(\mathbb{R}^d))$ is obtained from the coproduct of $E_d$-algebras by attaching an $E_d$-cell along the Browder bracket of the respective generators. Universal properties of free algebras and colimits imply that it is enough to choose two cycles, in our case $v$ and $[v,v]$, together with a nullhomotopy of their Browder bracket. Since $[[v,v],v]=0$, a nullhomotopy exists.
 
We pursue neither of these sketches here. Instead, we notice that both rely on the same key fact, namely that $[v,v]$ lies in the center of the shifted Lie algebra $H_*(B(\mathbb{R}^d))$. This algebraic observation points the way to a simple and rigorous definition of the desired extremal stabilization maps.

\subsection{Transit algebras} 

We approach Theorem \ref{thm:extremal stability}, a priori a topological statement, in an entirely algebraic setting. As shown by the first author \cite{Knudsen}, the total homology of the configuration spaces of a manifold is the homology of a certain Lie algebra---see Theorem \ref{thm:lie homology} below---so stability phenomena in Lie algebra homology give rise to stability phenomena in the homology of configuration spaces. 

The homology of a Lie algebra $\fg$ supports two types of action relevant to stability. First, given an Abelian quotient $\fg \to \fk$, there results an action of $\Sym(\fk^\vee[-1])$ on the Lie algebra homology of $\mathfrak{g}$; in the example of configuration spaces, this degree lowering action gives rise to the maps \[H_*(B_n(M), \bbQ) \to H_*(B_{n-1}(M), \bbQ)\] obtained from the inclusion ${\rm Conf}_{n}(M) \subseteq {\rm Conf}_{n-1}(M) \times M$  by pairing with an element of $H^*(M)$ and applying transfer. Second, given a central subalgebra $\fh \subseteq \fg$, there results an action of $\Sym(\fh[1])$ on the Lie algebra homology of $\mathfrak{g}$; in the example of configuration spaces, this degree raising action gives rise to the extremal stabilization maps described heuristically above. For details on the relevant Lie algebras, the reader may consult Section \ref{section:applications}.

These two actions do not commute, so there does not result an action of the tensor product of the two algebras. Instead, the two actions interact via Weyl relations, giving rise to an action of an algebra that we call a \emph{transit algebra}---see Definition \ref{def:transit algebra}. As shown in \S\ref{section:applications}, transit algebras provide a common source for classical homological stability and extremal stability.

\subsection{Acknowledgments}
We thank Megan Maguire for helpful conversations and for the initial inspiration to investigate this question. We thank Peter Patzt and Muhammad Yameen for helpful comments on an earlier version of this paper. We thank the referee for their careful reading and constructive feedback.

\subsection{Conflict of interest}
On behalf of all authors, the corresponding author states that there is no conflict of interest.

\section{Algebraic background}

In this section, we detail our conventions on graded objects, algebras, and modules.

\subsection{Degrees and slopes} We work in the setting of bigraded vector spaces over $\mathbb{Q}$. Such a vector space $V$ arrives equipped with a decomposition $V=\bigoplus_{n,i\in\mathbb{Z}} V_{n,i}$. We write $\langle r\rangle$ and $[s]$ for the relevant shift operations, i.e., \[(V\langle r\rangle [s])_{n,i}=V_{n-r,i-s}.\] The parameters $n$ and $i$ are called the weight and the (homological) degree, respectively, and we write $w(v)=n$ and $d(v)=i$ for a bihomogeneous element $v\in V$ of bidegree $(n,i)$.  

Duals and tensor products of bigraded vector spaces are defined by the stipulations \[(V^\vee)_{n,i}=\mathrm{Hom}_{\mathbb{Q}}(V_{-n,-i},\mathbb{Q})\qquad\qquad (V\otimes W)_{n,i}=\bigoplus_{a+b=n,c+d=i}V_{a,c}\otimes W_{b,d}.\] With this tensor product, the category of bigraded vector spaces is monoidal, and we equip it with the symmetric monoidal structure whose symmetry incorporates Koszul signs in the homological degree \emph{but not in the weight.} Because of this symmetry, parity of degree will play an important role in what follows, and we write \[V^\epsilon=\bigoplus_{n\in\mathbb{Z}}\,\bigoplus_{i\equiv \epsilon\,\mathrm{mod}\,2}V_{n,i}\] for $\epsilon\in\{0,1\}$, considered as a bigraded subspace of $V$.

We say that $V$ is of finite type if $V_{n,i}$ is finite dimensional for $n,i\in\mathbb{Z}$. If $V$ is of finite type and vanishes in a cofinite set of bidegrees, we say that $V$ is finite dimensional. We say that $V$ is bounded below if $V_{n,i}=0$ whenever either $n$ or $i$ is a negative number of sufficiently large absolute value. We say that $V$ is connected if $V_{0,0}=0$. 

We record the following simple observations regarding the interaction of these finiteness properties with duals and tensor products.

\begin{lemma}\label{lem:finiteness}
Let $V$ and $W$ be bigraded vector spaces.
\begin{enumerate}
\item If $V$ is of finite type or finite dimensional, then so is $V^\vee$. 
\item If $V$ and $W$ are finite dimensional, then so is $V\otimes W$.
\item If $V$ and $W$ are bounded below, then so is $V\otimes W$.
\item If $V$ and $W$ are bounded below of finite type, then so is $V\otimes W$.
\end{enumerate}
\end{lemma}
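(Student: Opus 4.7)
My plan is to verify each of the four claims directly from the definitions; the real work is careful bookkeeping about supports and dimensions, and I do not anticipate any conceptual obstacle.

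For (1), I would observe that by the definition of the dual, $(V^\vee)_{n,i}$ has the same dimension as $V_{-n,-i}$, so the finite type property is preserved immediately. If in addition $V$ vanishes outside a cofinite set of bidegrees, then $V^\vee$ vanishes outside the image of that set under $(n,i) \mapsto (-n,-i)$, which is again cofinite. This handles both halves of (1).

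For (2), suppose $V$ and $W$ are finite dimensional, supported in finite subsets $S_V, S_W \subseteq \mathbb{Z}^2$. Then from the tensor product formula, $(V \otimes W)_{n,i}$ can only be nonzero if $(n,i)$ lies in the Minkowski sum $S_V + S_W$, which is finite; and at each such bidegree, $(V \otimes W)_{n,i}$ is a finite direct sum of tensor products of finite dimensional spaces, hence finite dimensional. For (3), writing $V_{n,i} = 0$ when $n < -N_V$ or $i < -I_V$, and similarly for $W$ with constants $N_W, I_W$, the summand $V_{a,c} \otimes W_{n-a, i-c}$ of $(V\otimes W)_{n,i}$ vanishes unless \[-N_V \leq a \leq n + N_W \quad \text{and} \quad -I_V \leq c \leq i + I_W,\] which forces $(V \otimes W)_{n,i}=0$ when $n < -N_V - N_W$ or $i < -I_V - I_W$.

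Finally, (4) combines (3) with the finite type hypothesis: the inequalities just displayed cut out a finite set of pairs $(a,c)$ contributing to $(V\otimes W)_{n,i}$ for each fixed $(n,i)$, and each contribution is finite dimensional by the finite type assumption on $V$ and $W$, so the whole direct sum is finite dimensional. Since the boundedness below is supplied by (3), the proof is complete. The only point requiring any attention is to extract the two bounds (on weight and on homological degree) separately, since ``bounded below'' is imposed in both parameters independently; everything else is immediate.
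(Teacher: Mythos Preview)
Your argument is correct in every part; the bookkeeping on supports and dimensions is exactly what is needed, and the separate treatment of the weight and degree bounds in (3) and (4) is handled properly. The paper itself states this lemma without proof, describing it only as ``simple observations,'' so your verification is more detailed than what the authors provide and there is nothing further to compare.
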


We think of a bigraded vector space as a planar grid of vector spaces, with the weight recorded on the horizontal axis and the degree on the vertical. Our language reflects this idea; for example, we say that $V$ is first-quadrant if $V_{n,i}=0$ for $n<0$ and $i<0$ (in which case $V^\vee$ is third-quadrant). This picture also informs the following.

\begin{definition}
Fix a bigraded vector space $V$.
\begin{enumerate}
\item Let $0\neq v\in V$ be a bihomogeneous element of nonzero weight. The \emph{slope} of $v$ is the rational number $m(v)=d(v)/w(v)$.  
\item Suppose that $V\neq0$ is concentrated in nonzero weights. The \emph{maximal slope} of $V$ is \[m_{\max}(V)=\max \{C\in\mathbb{Q}\mid\exists v\in V : m(v)=C\},\] provided the maximum exists (resp. minimal slope, $m_{\min}(V)$, $\min$, minimum).
\end{enumerate}
\end{definition}

Given $C\in\mathbb{Q}$, we write $V_C\subseteq V$ for the span of the bihomogeneous elements of slope $C$, and similarly for $V_{<C}$, $V_{>C}$, and so on. If $V=V_C$, then we say that $V$ is of slope $C$. Note that, if $V$ is of slope $C$, then $V$ is concentrated in nonzero weight.

A related notion of slope will also be important in what follows.

\begin{definition}
Fix a bigraded vector space $V$ and $C\in\mathbb{Q}$. A \emph{ray of slope} $C$ in $V$ is a subspace of the form $\mathcal{R}=\bigoplus_{u\geq0} V_{au, bu+i_0}$, where $C=b/a$ with $(a,b)=1$. The \emph{graded dimension} of $\mathcal{R}$ is the function \[n\mapsto\begin{cases}
 \dim \mathcal{R}_{n,Cn+i_0}&\quad a\mid n\\
 0&\quad \text{otherwise}. 
 \end{cases}\] 
\end{definition}

At times, it will be convenient to work with a third grading, by polynomial degree, which will typically be either non-negative or non-positive. Bigraded vector spaces may be regarded as trigraded vector spaces concentrated in polynomial degree $1$; trigraded vector spaces may be regarded as bigraded vector spaces by forgetting the third grading. Tensor products and duals of trigraded vector spaces are defined analogously, with Koszul signs reflecting only the homological degree. Discussions of slope will never involve the polynomial grading.

\begin{example}\label{example:degrees}
Let $\mathbb{Q}(n,i,r)$ denote the trigraded vector space that is nonzero only in tridegree $(n,i,r)$, where it is $1$-dimensional. There is a canonical isomorphism \[\mathbb{Q}(n,i,r)\otimes\mathbb{Q}(m,j,s)\cong\mathbb{Q}(m+n,i+j,r+s)\] of trigraded vector spaces, under which the monoidal symmetry acts by $(-1)^{ij}$. Furthermore, there is a canonical isomorphism \[\mathbb{Q}(n,i,r)^\vee\cong\mathbb{Q}(-n,-i,-r).\]
\end{example}


\subsection{Symmetric (co)algebras and their duals} Given a trigraded vector space $V$, we write $\Sym^k(V)=(V^{\otimes k})_{S_k}$, where the symmetric group acts via the symmetry of the symmetric monoidal structure introduced above. 

\begin{definition}
Let $V$ be a trigraded vector space. The \emph{symmetric algebra} on $V$ is the trigraded vector space $\Sym(V)=\bigoplus_{k\geq0}\Sym^k(V)$, equipped with the product given componentwise by the dashed filler in the commuting diagram \[\xymatrix{
V^{\otimes k}\otimes V^{\otimes \ell}\ar[d]\ar@{=}[r]^-\sim& V^{\otimes (k+\ell)}\ar[d]\\
\Sym^k(V)\otimes \Sym^\ell(V)\ar@{-->}[r]&\Sym^{k+\ell}(V),
}\] where the vertical arrows are the respective projections to the coinvariants.
\end{definition}

This multiplication map furnishes the symmetric algebra with the structure of a commutative algebra object in the category of trigraded vector spaces. It is a universal commutative algebra in the sense of the commutative diagram \[\xymatrix{
V\ar[r]\ar[d]&\mathcal{A}\\
\Sym(V)\ar@{-->}[ur]_{\exists !}
}\] of trigraded vector spaces, in which $\mathcal{A}$ is a trigraded commutative algebra and the dashed filler a map of such. In particular, there is a canonical, natural map of algebras \[\Sym(V\oplus W)\to \Sym(V)\otimes \Sym(W)\] induced by the assignment $(v,w)\mapsto v\otimes 1+1\otimes w$, which is easily seen to be an isomorphism. This map furnishes the symmetric algebra functor with an oplax monoidal structure; in particular, $\Sym(V)$ is canonically a bicommutative bialgebra with comultiplication induced by the diagonal of $V$. 

Given a trihomogeneous basis $\{t_i\}_{i\in I}$ for $V$, a trihomogeneous basis for $\Sym^k(V)$ is provided by the set of equivalence classes of degree $k$ monomials $t_{i_1}\cdots t_{i_k}$ under the equivalence relation generated by the relation \[t_{i_1}\cdots t_{i_j}t_{i_{j+1}}\cdots t_{i_k}\sim (-1)^{d(t_{i_j})d(t_{i_{j+1}})}t_{i_1}\cdots t_{i_{j+1}}t_{i_{j}}\cdots t_{i_k}.\] In this way, we obtain a trihomogeneous basis for $\Sym(V)$, which we refer to as the \emph{monomial basis}. In this basis, multiplication is given by concatenation of monomials and comultiplication by the usual shuffle coproduct. It should be emphasized that a generator behaves as an exterior or polynomial generator according to the parity of its homological degree alone.

In the presence of the above basis, we write $\partial_{t_i}\in V^\vee$ for the functional dual to $t_i$, i.e., $\partial_{t_i}(t_j)=\delta_{ij}.$ We extend these functionals to endomorphisms of $\Sym(V)$ by requiring them to act as derivations, i.e., \[\partial_t(pq)=\partial_t(p)q+(-1)^{d(p)}p\partial_t(q)\] for monomials $p$ and $q$. In other words, the functional $\partial_t$ acts according to the usual rules of differential calculus, with appropriate Koszul signs. Note that the resulting endomorphism of $\Sym(V)$ lowers each degree by the corresponding degree of $t$.

When $V$ is of finite type, the functionals $\partial_{t_i}$ generate $V^{\vee}$.  In this case, since $\partial_{t_1}\partial_{t_2}=(-1)^{d(t_1)d(t_2)}\partial_{t_2}\partial_{t_1}$, there is a well-defined endomorphism of $\Sym(V)$ for every monomial in $\Sym(V^\vee)$. In light of the preceding observation on the tridegree of $\partial_t$, this construction determines a trigraded linear embedding \[\Sym(V^\vee)\to \Sym(V)^\vee,\] which is an isomorphism under appropriate finiteness assumptions; in particular, the isomorphism holds if $V$ is a bigraded vector space bounded below of finite type, regarded as concentrated in polynomial degree $1$. Since this fact is essentially standard, we content ourselves with reminding the reader that symmetric group invariants and coinvariants are canonically isomorphic in characteristic zero, and that our conventions on trigraded duals ensure that the target of the map in question is an infinite sum rather than an infinite product.

In this situation, by adjunction, there is a one-to-one correspondence of sets of maps \[\varphi:\Sym(V^\vee)\otimes \mathcal{M}\to\mathcal{M} \qquad \iff\qquad \tilde \varphi:\mathcal{M}\to \Sym(V)\otimes\mathcal{M}\] for any bounded below trigraded vector space $\mathcal{M}$, which is given explicitly by the formulas $\tilde \varphi(m)=\sum_p p\otimes \varphi(\partial_p\otimes m)$ and $\varphi(\partial_p\otimes m)=\partial_p\cap \tilde \varphi(m),$ where $p$ ranges over the monomial basis (finite in each tridegree). In this way, a $\Sym(V^\vee)$-module structure on $\mathcal{M}$ determines a $\Sym(V)$-comodule structure, referred to as the \emph{adjoint comodule}. If $\mathcal{M}$ is finite dimensional in each tridegree, then this comodule structure is in turn equivalent to the $\Sym(V^\vee)$-module structure on $\mathcal{M}^\vee$ determined by the map \[\Sym(V^\vee)\otimes \mathcal{M}^\vee\cong (\Sym(V)\otimes\mathcal{M})^\vee\xrightarrow{\tilde\varphi^\vee}\mathcal{M}^\vee.\] We refer to this module as the \emph{dual adjoint module}.

\subsection{Modules and growth}\label{section:modules and growth} In what follows, we will be interested in the eventual growth rates of graded dimensions of vector spaces.

\begin{definition}
A \emph{quasi-polynomial} is an element of $\Pi[t]$, where $\Pi$ is the ring of periodic functions from $\mathbb{Z}$ to $\mathbb{Q}$. The \emph{period} of a quasi-polynomial is the least common multiple of the periods of its coefficients.
\end{definition}

When working over a symmetric algebra with first-quadrant generators of fixed slope, rays of this slope exhibit predictable growth. As a matter of notation, we write \[\mathrm{lcm}(V)=\mathrm{lcm}\{n\in\mathbb{Z}\mid \exists i\in\mathbb{Z}:V_{n,i}\neq0\},\] where $V$ is assumed to vanish in a cofinite set of degrees including zero.

\begin{lemma}\label{lem:finitely generated growth}
Let $V$ be a finite dimensional, first-quadrant bigraded vector space of slope $C\in\mathbb{Q}$ and $\mathcal{M}$ a finitely generated $\Sym(V)$-module. The graded dimension of any ray in $\mathcal{M}$ of slope $C$ is eventually equal to a quasi-polynomial of degree at most $\dim V^0-1$ and period dividing $\mathrm{lcm}(V^0)$.
\end{lemma}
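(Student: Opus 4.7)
The plan is to identify the ray $\mathcal{R}$ as a $\Sym(V)$-submodule of $\mathcal{M}$ and then invoke the standard Hilbert series theory for finitely generated modules over a Noetherian graded polynomial ring.

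First, I would note that, since $V$ has slope $C = b/a$ with $(a,b)=1$, every nonzero bihomogeneous $v \in V$ has bidegree $(a k_v,\, b k_v)$ for some positive integer $k_v$. Multiplication by $v$ therefore carries $\mathcal{M}_{au,\,bu+i_0}$ into $\mathcal{M}_{a(u+k_v),\,b(u+k_v)+i_0}$, so the ray $\mathcal{R}$ is closed under the $\Sym(V)$-action. Since $\Sym(V) \cong \Sym(V^0)\otimes\Sym(V^1)$ and $\Sym(V^1)$ is the finite-dimensional exterior algebra on the odd generators, $\Sym(V)$ is a finite free extension of the finitely generated polynomial ring $\Sym(V^0)$ and is thus Noetherian; consequently $\mathcal{R}$ is finitely generated as a $\Sym(V)$-module, and in particular as a $\Sym(V^0)$-module.

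Next, let $v_1,\ldots,v_r$ be a bihomogeneous basis of $V^0$ and set $k_i = w(v_i)/a$. Then the Hilbert series in the reindexed parameter $u=n/a$,
\[
H_\mathcal{R}(t) \;=\; \sum_{u \geq 0} \bigl(\dim \mathcal{R}_{au,\,bu+i_0}\bigr)\,t^u,
\]
takes the rational form $\frac{P(t)}{\prod_{i=1}^{r}(1 - t^{k_i})}$ for some Laurent polynomial $P(t)\in\mathbb{Z}[t,t^{-1}]$. A standard partial fractions analysis then shows that the coefficient sequence agrees, for $u$ sufficiently large, with a quasi-polynomial in $u$ of degree at most $\dim V^0 - 1$ and period dividing $\mathrm{lcm}(k_1,\ldots,k_r)$, since $t=1$ is the unique root of unity contributing to polynomial growth and the order of the pole there is at most $r$. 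Translating back to $n=au$ multiplies the period by $a$, yielding $a\cdot\mathrm{lcm}(k_i)=\mathrm{lcm}(V^0)$, as required.

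The step deserving the most care is the passage from a trigraded $\Sym(V)$-module to a single-variable Hilbert series in $u$: one must check that the exterior factors from $V^1$ contribute only to the numerator $P(t)$ rather than introducing extra poles at $t=1$, and that the reindexing by $u=n/a$ tracks the period correctly. Both points follow from the finite-dimensionality of $\Sym(V^1)$ and the observation that the weight of every nonzero bihomogeneous element of $V^0$ is an integer multiple of $a$.
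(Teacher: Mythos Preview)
Your proof is correct and follows essentially the same route as the paper: you observe that the ray is a $\Sym(V)$-submodule, invoke Noetherianity to obtain finite generation, strip off the finite-dimensional exterior factor $\Sym(V^1)$, and then reduce to the classical Hilbert function theory for graded modules over a polynomial ring. The only difference is cosmetic---you spell out the Hilbert series and partial-fraction argument explicitly, whereas the paper simply cites \cite[Theorem~11.1]{AtiyahMacdonald} after making the same reductions.
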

\begin{proof}
Any ray in $\mathcal{M}$ is a $\Sym(V)$-submodule, since $V$ has slope $C$, and $\Sym(V)$ is Noetherian, since $V$ is finite dimensional, so we assume that $\mathcal{M}$ is itself a ray of slope $C$. Applying the shift functor $[r]$ to this ray does not change its graded dimension, so we may assume that $\mathcal{M}=\mathcal{M}_C$. Lastly, we may assume that $V=V^0$; indeed, $\Sym(V)\cong\Sym(V^0)\otimes\Sym(V^1)$, and $\Sym(V^1)$ is finite dimensional, since $V^1$ is, so $\mathcal{M}$ is necessarily finitely generated over $\Sym(V^0)$. With these assumptions in place, the homological degree in $\Sym(V)$ and in $\mathcal{M}$ is determined by the weight, so the claim follows from the classical theory of the Hilbert function of a finitely generated graded module over a graded polynomial ring \cite[Theorem 11.1]{AtiyahMacdonald}.
\end{proof}

We pair this result with a criterion for detecting finite generation over symmetric algebras with generators of fixed slope.

\begin{lemma}\label{lem:finitely generated rays}
Let $V$ be a finite dimensional bigraded vector space of positive weight. Let $\mathcal{M}$ be a finitely generated $\Sym(V)$-module. If $m_{\max}(V^0)\leq C$, then every ray in $\mathcal{M}$ of slope $C$ is finitely generated over $\Sym(V^0_C)$.  If $m_{\max}(V^0)<C$ then every ray in $\mathcal{M}$ of slope $C$ is eventually $0$. 
\end{lemma}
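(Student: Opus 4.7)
The plan is to combine the splitting $\Sym(V^0) \cong \Sym(V^0_C) \otimes \Sym(V^0_{<C})$ with a slope-counting argument to produce an explicit finite generating set for $\mathcal{R}$ over $\Sym(V^0_C)$. First, I would reduce to the case $V = V^0$: because $V^1$ is finite-dimensional and concentrated in odd homological degree, $\Sym(V^1)$ is an exterior algebra on a finite-dimensional space, hence finite-dimensional, so $\mathcal{M}$ is already finitely generated as a $\Sym(V^0)$-module by some generators $g_1,\ldots,g_r$ of bidegrees $(w_j, d_j)$. Since the hypothesis $m_{\max}(V^0)\leq C$ implicitly places $V^0$ in nonzero weights, we may split $V^0 = V^0_C \oplus V^0_{<C}$, and the (a priori infinite) set $\{p g_j\}$, with $p$ running over a monomial basis of $\Sym(V^0_{<C})$ and $1\leq j\leq r$, is a $\Sym(V^0_C)$-module generating set for $\mathcal{M}$.

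The crux is the claim that the subset $S := \{p g_j : p g_j \in \mathcal{R}\}$ already generates $\mathcal{R}$ over $\Sym(V^0_C)$. To see this, take bihomogeneous $v\in \mathcal{R}_{au, bu + i_0}$ and decompose bihomogeneously $v = \sum_\alpha q_\alpha p_\alpha g_{j_\alpha}$ with each $q_\alpha$ in the monomial basis of $\Sym(V^0_C)$ and each $p_\alpha$ in that of $\Sym(V^0_{<C})$. Every bihomogeneous element of $\Sym(V^0_C)$ has slope exactly $C$, and, since $\gcd(a,b)=1$, its bidegree must have the form $(ak, bk)$ for some integer $k\geq 0$. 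Subtracting $(ak, bk)$ from the bidegree $(au, bu + i_0)$ of $v$ yields $(a(u-k), b(u-k) + i_0)$, which is again on the ray, so $p_\alpha g_{j_\alpha}\in\mathcal{R}$; hence $v\in \Sym(V^0_C)\cdot S$.

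Finiteness of $S$ then follows from a slope argument: the condition $p g_j \in \mathcal{R}$ amounts to the affine equation $a\,d(p) - b\,w(p) = a i_0 + b w_j - a d_j$ on bihomogeneous $p$. Setting $C' := m_{\max}(V^0_{<C}) < C$ (with the convention $C' = -\infty$ when $V^0_{<C} = 0$), the slope bound $d(p) \leq C' w(p)$ for $p$ of positive weight gives $a\,d(p) - b\,w(p) \leq -a(C - C') w(p)$, which bounds $w(p)$ in terms of $g_j$ and $i_0$; combined with the slope bound this places $p$ in a bounded range of bidegrees, and since $V^0_{<C}$ is finite-dimensional in positive weights, $\Sym(V^0_{<C})$ has only finitely many bihomogeneous elements in any bounded bidegree range. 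The scalar case $w(p)=0$ contributes at most one element per $g_j$. The main obstacle is the decomposition step, which relies on the observation that bihomogeneous elements of $\Sym(V^0_C)$ shift bidegrees by an integer multiple of the ray's direction vector $(a,b)$ — an observation made possible by the primitivity $\gcd(a,b)=1$ in the definition of a ray.
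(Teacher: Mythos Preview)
Your proof is correct and follows essentially the same approach as the paper: reduce to $V=V^0$ using the finite-dimensionality of $\Sym(V^1)$, use the tensor splitting $\Sym(V^0)\cong\Sym(V^0_C)\otimes\Sym(V^0_{<C})$, and bound by slope-counting the monomials from $\Sym(V^0_{<C})$ that can land on a fixed ray. The paper's only additional maneuver is to first reduce to the case $\mathcal{M}=\Sym(V)$ using shifts, direct sums, and quotients, which amounts to absorbing your generators $g_j$ into the shifts and thereby sidesteps the explicit verification that each $p_\alpha g_{j_\alpha}$ lands on the ray with non-negative parameter $u-k$.
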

\begin{proof}
If $m_{\max}(V^0)<C$, then $V_C^0=0$ by assumption, so $\Sym(V_C^0)=\mathbb{Q}$, the monoidal unit. A finitely generated $\mathbb{Q}$-module is simply a finite dimensional bigraded vector space, so the first claim implies the second.

For the first claim, note that $\cM$ is a quotient of a finite sum of shifts of the free module $\Sym(V)$. Hence any slope $C$ ray in $\cM$ is a quotient of a finite sum of shifts of slope $C$ rays in $\Sym(V)$. Since finite sums, shifts, and quotients  preserve finite generation, it suffices to consider the case $\cM = \Sym(V)$.   As in the proof of Lemma \ref{lem:finitely generated growth}, we may further assume that $V=V^0$. Choosing a bihomogeneous basis $\{u_1, \dots, u_m\}$ for $V_{<C}$, we observe that $\mathcal{M}$ is a free module over $\Sym(V^0_C)$ with basis given by the set of monomials of the form $u_1^{j_1} \dots u_m^{j_m}$ with $j_1, \dots, j_m \in \bbZ_{\geq0}$. Fixing a ray $\mathcal{R}=\bigoplus_{t\geq0} \mathcal{M}_{at, bt+i_0}$ with $b/a=C$ and $(a,b)=1, a > 0$, such a monomial lies in $\mathcal{R}$ if and only if \[\sum_{k = 1}^m j_k (a d(u_k) - bw(u_k))  = ai_0.\] By assumption, we have $ad(u_k)-bw(u_k)<0$ for $1\leq k\leq m$, so only finitely many monomials lie in $\mathcal{R}$. Since the product of a monomial and an element of $\Sym(V_C^0)$ lies in $\mathcal{R}$ if and only if the monomial does, the claim follows.
\end{proof}

At times, an alternative to finite generation will be the relevant property.

\begin{definition}\label{def:finite detection}
Let $V$ be a finite type first-quadrant bigraded vector space and $\mathcal{M}$ a $\Sym(V^\vee)$-module. We say that $\mathcal{M}$ is \emph{finitely detected} if there is a surjective linear map $q:\mathcal{M}\to N$ with finite dimensional target such that, for every $m\in \mathcal{M}\setminus \{0\}$, there is a monomial $p\in \Sym(V)$ with $q(\partial_p\cdot m)\neq 0$.
\end{definition}

Through duality, finite detection is closely related to other notions of finiteness. For simplicity, we do not state the following results in the greatest possible generality.

\begin{lemma}\label{lem:finite detection adjoint}
Let $V$ be a finite dimensional, first-quadrant bigraded vector space. The following are equivalent for a bounded below $\Sym(V^\vee)$-module $\mathcal{M}$.
\begin{enumerate}
\item The $\Sym(V^\vee)$-module $\mathcal{M}$ is finitely detected.
\item The adjoint $\Sym(V)$-comodule structure on $\mathcal{M}$ is finitely cogenerated.
\item The dual adjoint $\Sym(V^\vee)$-module structure on $\mathcal{M}^\vee$ is finitely generated.
\end{enumerate}
\end{lemma}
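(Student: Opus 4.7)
The plan is to prove $(1)\Leftrightarrow(2)$ by directly unpacking the definitions of the adjoint comodule structure and of finite detection, and then to establish $(2)\Leftrightarrow(3)$ by dualizing, exploiting the duality between finitely generated modules and finitely cogenerated comodules in the present finite-type setting.

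For $(1)\Leftrightarrow(2)$, the essential ingredient is the explicit formula $\tilde\varphi(m)=\sum_p p\otimes(\partial_p\cdot m)$ recalled just before the statement, with $p$ ranging over the monomial basis of $\Sym(V)$. Unpacking finite cogeneration, it asserts the existence of a surjection $q\colon\mathcal{M}\to N$ with $N$ finite dimensional such that the induced comodule map $(1\otimes q)\circ\tilde\varphi\colon\mathcal{M}\to\Sym(V)\otimes N$ is injective. Substituting the formula, this composite sends $m$ to $\sum_p p\otimes q(\partial_p\cdot m)$, which vanishes if and only if $q(\partial_p\cdot m)=0$ for every monomial $p$. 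Injectivity is therefore precisely the finite detection condition for $q$, and the equivalence is immediate.

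For $(2)\Leftrightarrow(3)$, I would pass to duals. Since $V$ is finite dimensional and first-quadrant, $\Sym(V)$ is bounded below of finite type, so Lemma~\ref{lem:finiteness} provides the natural isomorphism $(\Sym(V)\otimes N)^\vee\cong\Sym(V^\vee)\otimes N^\vee$ for any finite dimensional $N$, and under the implicit finite-type assumption on $\mathcal{M}$ we have $\mathcal{M}^{\vee\vee}\cong\mathcal{M}$. The dual of $(1\otimes q)\circ\tilde\varphi$ is then a map $\Sym(V^\vee)\otimes N^\vee\to\mathcal{M}^\vee$ which, by the very definition of the dual adjoint module structure, coincides with the canonical $\Sym(V^\vee)$-module map extending $q^\vee\colon N^\vee\to\mathcal{M}^\vee$. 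Injectivity of the original map is equivalent to surjectivity of this dualized map, which says precisely that $N^\vee$ generates $\mathcal{M}^\vee$ as a $\Sym(V^\vee)$-module. Conversely, any finite dimensional subspace generating $\mathcal{M}^\vee$ dualizes to a finite cogenerator for $\mathcal{M}$.

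The hard part will be the diagrammatic bookkeeping needed to verify that dualization interchanges the comodule-to-cofree map $(1\otimes q)\circ\tilde\varphi$ with the module-from-free map determined by $q^\vee$. The content is no more than the universal property of cofree comodules dualizing to that of free modules, but keeping tensor-hom adjunctions, Koszul signs, and the swap between $\Sym(V)$-comodule and $\Sym(V^\vee)$-module data straight is where the careful preliminary setup really pays off.
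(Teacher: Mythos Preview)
Your proposal is correct and follows essentially the same approach as the paper: both write out the composite $(\id\otimes q)\circ\tilde\varphi$ in the monomial basis to obtain $(1)\Leftrightarrow(2)$, and then dualize an injection to a surjection for $(2)\Leftrightarrow(3)$. You are more explicit about the converse $(3)\Rightarrow(2)$ and about the implicit finite-type hypothesis on $\mathcal{M}$, but the paper's (terser) argument is otherwise identical in spirit.
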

\begin{proof}
Let $q:\mathcal{M}\to N$ be a surjection with finite dimensional target. The composite \[\mathcal{M}\xrightarrow{\delta} \Sym(V)\otimes \mathcal{M}\xrightarrow{\id\otimes q} \Sym(V)\otimes N\] is given by the formula $m\mapsto \sum_p p\otimes q(\partial_p\cdot m)$, where $p$ ranges over the monomial basis for $\Sym(V)$, which is finite in each tridegree. Since $q$ cogenerates if and only if this composite is injective, the implication (1)$\iff$(2) follows. Since the dual of an injection is surjective, and vice versa, we also have the implication (1)$\iff$(3).
\end{proof}

It follows that finitely detected modules enjoy many of the properties of finitely generated modules.

\begin{lemma}\label{lem:finite detection subquotient}
Let $V$ be a finite dimensional, first-quadrant bigraded vector space. Any submodule or quotient of a finitely detected $\Sym(V^\vee)$-module is also finitely detected.
\end{lemma}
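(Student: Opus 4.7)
The plan is to handle the submodule case directly from the definition and to route the quotient case through the duality of Lemma \ref{lem:finite detection adjoint}.

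For a submodule $\mathcal{N}\subseteq\mathcal{M}$, suppose $q\colon \mathcal{M}\to N$ witnesses finite detection. I would propose the restriction $q|_\mathcal{N}\colon \mathcal{N}\to q(\mathcal{N})$ as a witness for $\mathcal{N}$. The target remains finite dimensional, and for any nonzero $m\in\mathcal{N}$, the monomial $p\in\Sym(V)$ that detects $m$ in $\mathcal{M}$ automatically satisfies $\partial_p\cdot m\in\mathcal{N}$ because $\mathcal{N}$ is closed under the action, so $q|_\mathcal{N}(\partial_p\cdot m)=q(\partial_p\cdot m)\neq 0$.

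For a quotient $\mathcal{Q}=\mathcal{M}/\mathcal{K}$, no such direct descent of $q$ is apparent, so I would invoke Lemma \ref{lem:finite detection adjoint} to replace finite detection of $\mathcal{M}$ with finite generation of the dual adjoint $\Sym(V^\vee)$-module $\mathcal{M}^\vee$. The quotient $\mathcal{Q}$ inherits its $\Sym(V^\vee)$-action from $\mathcal{M}$, and the functoriality of the adjoint comodule construction—apparent from the defining formula $\delta(m)=\sum_p p\otimes \partial_p\cdot m$—shows that the projection $\mathcal{M}\to\mathcal{Q}$ is a comodule surjection. Dualizing produces an injection $\mathcal{Q}^\vee\hookrightarrow\mathcal{M}^\vee$ of dual adjoint modules. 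Since $V^\vee$ is finite dimensional, the ring $\Sym(V^\vee)$ is Noetherian, and so the submodule $\mathcal{Q}^\vee$ of the finitely generated module $\mathcal{M}^\vee$ is itself finitely generated; a second application of Lemma \ref{lem:finite detection adjoint} then yields finite detection of $\mathcal{Q}$.

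The main obstacle I anticipate is the bookkeeping needed to confirm that dualization really does send quotient comodules to submodules on the dual side and that the adjoint comodule construction is sufficiently functorial in $\Sym(V^\vee)$-module maps; neither point is deep, but both require some care with the parity conventions governing the symmetric algebra.
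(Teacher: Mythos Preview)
Your argument is correct. The paper's own proof is a single sentence invoking Lemma \ref{lem:finite detection adjoint} and Noetherianity of $\Sym(V^\vee)$, treating both cases uniformly through duality: a submodule of $\mathcal{M}$ dualizes to a quotient of $\mathcal{M}^\vee$, while a quotient dualizes to a submodule, and both are finitely generated over a Noetherian ring. You instead split the two cases, handling submodules directly from the definition of finite detection and reserving the duality argument for quotients. Your submodule argument is more elementary and avoids any appeal to Noetherianity in that case; the paper's approach is more economical and symmetric. The bookkeeping you flag in your final paragraph (functoriality of the adjoint comodule and behavior of duals) is indeed routine and is implicitly relied upon by the paper as well.
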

\begin{proof}
The claim follows from Lemma \ref{lem:finite detection adjoint} and Noetherianity of $\Sym(V^\vee)$.
\end{proof}

\begin{lemma}\label{lem:dual module growth}
Let $V$ be a finite dimensional, first-quadrant bigraded vector space of slope $C\in\mathbb{Q}$ and $\mathcal{M}$ a finitely detected $\Sym(V^\vee)$-module. The graded dimension of any ray in $\mathcal{M}$ of slope $C$ is eventually equal to a quasi-polynomial of degree at most $\dim V^0-1$ and period dividing $\mathrm{lcm}(V^0)$.
\end{lemma}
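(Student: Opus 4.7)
The plan is to invoke the duality of Lemma \ref{lem:finite detection adjoint} to reduce to Lemma \ref{lem:finitely generated growth}. By that lemma, finite detection of $\mathcal{M}$ is equivalent to finite generation of the dual adjoint $\Sym(V^\vee)$-module $\mathcal{M}^\vee$. The remaining task is to relate rays in $\mathcal{M}$ to rays in $\mathcal{M}^\vee$ and to massage the data into the form required by Lemma \ref{lem:finitely generated growth}, whose first-quadrant hypothesis on the generating space is not met by $V^\vee$ directly.

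To do this, I would apply the grading-negation functor $W\mapsto W^\flat$ defined by $(W^\flat)_{n,i}=W_{-n,-i}$. This is a symmetric monoidal autoequivalence of the category of bigraded vector spaces (Koszul signs, being determined by parity, are preserved), so it commutes with formation of symmetric algebras and preserves finite generation of modules. Setting $\overline V=(V^\vee)^\flat$ and $\overline{\mathcal M}=(\mathcal{M}^\vee)^\flat$, one obtains a finitely generated $\Sym(\overline V)$-module. Since $(-)^\vee$ and $(-)^\flat$ both preserve the dimension of each bidegree component, $\overline V$ is noncanonically isomorphic to $V$ as a bigraded vector space; in particular it is finite-dimensional, first-quadrant, of slope $C$, with $\dim \overline V^0=\dim V^0$ and $\mathrm{lcm}(\overline V^0)=\mathrm{lcm}(V^0)$. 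Moreover, for any ray $\mathcal{R}=\bigoplus_{u\geq 0}\mathcal{M}_{au,\,bu+i_0}$ in $\mathcal{M}$, the subspace $\overline{\mathcal R}=\bigoplus_{u\geq 0}\overline{\mathcal M}_{au,\,bu+i_0}$ is a ray of slope $C$ in $\overline{\mathcal M}$, and dimension-counting yields $\dim\overline{\mathcal R}_{au,\,bu+i_0}=\dim\mathcal{R}_{au,\,bu+i_0}$ for every $u$.

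Applying Lemma \ref{lem:finitely generated growth} to $\overline{\mathcal M}$ and $\overline{\mathcal R}$ then supplies a quasi-polynomial of degree at most $\dim\overline V^0-1=\dim V^0-1$ and period dividing $\mathrm{lcm}(\overline V^0)=\mathrm{lcm}(V^0)$ describing the eventual graded dimension of $\overline{\mathcal R}$, hence of $\mathcal R$ by the termwise dimension match. I expect the only real obstacle to be the bookkeeping required to check that the grading-negation functor interacts cleanly with the construction of the dual adjoint module structure, so that the finitely generated $\Sym(V^\vee)$-module $\mathcal{M}^\vee$ really is transported to a finitely generated $\Sym(\overline V)$-module $\overline{\mathcal M}$. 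Since all ingredients of that construction---tensor product, symmetric algebra, duality, and the incorporation of Koszul signs in homological degree---commute with degree negation, this reduces to a routine if notationally delicate verification.
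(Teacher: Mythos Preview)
Your proposal is correct and is precisely the paper's own argument, which reads in full: ``The claim follows from Lemma \ref{lem:finitely generated growth} and \ref{lem:finite detection adjoint} after negating bidegrees.'' You have simply unpacked what ``negating bidegrees'' means via the functor $(-)^\flat$ and verified the necessary bookkeeping.
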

\begin{proof}
The claim follows from Lemma \ref{lem:finitely generated growth} and \ref{lem:finite detection adjoint} after negating bidegrees.
\end{proof}

\section{Transits and transit algebras}\label{section:transits and transit algebras}

In this section, we introduce a family of algebras acting on the homology of a Lie algebra.

\subsection{Lie algebras and transits} 

Throughout, the term ``Lie algebra'' refers to a Lie algebra in the category of bigraded vector spaces detailed above. Explicitly, a Lie algebra is a bigraded vector space $\mathfrak{g}$ equipped with a map of bigraded vector spaces $[-,-]:\mathfrak{g}\otimes\mathfrak{g}\to\mathfrak{g}$, called the bracket of $\mathfrak{g}$, satisfying the equations \begin{enumerate}
\item $[x,y]+(-1)^{d(x)d(y)}[y,x]=0$
\item $(-1)^{d(x)d(z)}[[x,y],z]+(-1)^{d(y)d(x)}[[y,z],x]+(-1)^{d(z)d(y)}[[z,x],y]=0$
\end{enumerate} for bihomogeneous elements $x,y,z\in\mathfrak{g}$. A map of Lie algebras is a map of bigraded vector spaces intertwining the respective brackets.

\begin{example}
We maintain the notation of Example \ref{example:degrees}. If $i$ is even, the free Lie algebra on $\mathbb{Q}(n,i)$ is simply $\mathbb{Q}(n,i)$ with trivial bracket. If $i$ is odd, the free Lie algebra on $\mathbb{Q}(n,i)$ is $\mathbb{Q}(n,i)\oplus \mathbb{Q}(2n,2i)$, with bracket given by the isomorphism $\mathbb{Q}(n,i)\otimes\mathbb{Q}(n,i) \cong\mathbb{Q}(2n,2i).$
\end{example}

We emphasize that, while weight is additive under the bracket, all signs are independent of weight. We write $\mathfrak{z}=\mathfrak{z}(\mathfrak{g})$ for the center of the Lie algebra $\mathfrak{g}$ and $\mathfrak{a}=\mathfrak{a}(\mathfrak{g})$ for its Abelianization.

We now formulate the definition from which all stability phenomena considered herein arise. Although the concept is simple, it appears to be new.

\begin{definition}
Let $\mathfrak{g}$ be a Lie algebra. A \emph{transit} of $\mathfrak{g}$ is a pair of maps of Lie algebras
\[\mathfrak{h}\xrightarrow{f}\mathfrak{g}\xrightarrow{g}\mathfrak{k}\] with $\mathfrak{h}$ and $\mathfrak{k}$ Abelian and $f$ central. We say the transit is \emph{null} or \emph{split} if $gf$ is trivial or bijective, respectively, and \emph{exact} if $f$ and $g$ form a short exact sequence. A \emph{map of transits} from $(f_1,g_1)$ to $(f_2,g_2)$ is a commutative diagram of Lie algebras of the form \[\xymatrix{
\mathfrak{h}_1\ar[d]\ar[dr]^-{f_1}\\
\mathfrak{h}_2\ar[r]_-{f_2}&\mathfrak{g}\ar[dr]_-{g_1}\ar[r]^-{g_2}&\mathfrak{a}_2\ar[d]\\
&&\mathfrak{a}_1.
}\]
\end{definition}

Maps of transits compose in the obvious way, forming a category. Note that a Lie algebra admits an exact transit if and only if it is two-step nilpotent.

\begin{example}\label{example:universal transit}
The pair $\mathfrak{z}\to \mathfrak{g}\to \mathfrak{a}$ given by inclusion and projection is a transit, called the \emph{universal transit}.
\end{example}

The term ``universal'' refers to the property of being a terminal object in the category of transits.

\begin{example}\label{example:universal transit}
Given transits $(f_1,g_1)$ and $(f_2, g_2)$ of $\mathfrak{g}$, the \emph{product transit} is the pair \[\mathfrak{h}_1\times \mathfrak{h_2}\xrightarrow{f_1+f_2}\mathfrak{g}\xrightarrow{(g_1,g_2)}\mathfrak{k}_1\times \mathfrak{k}_2\] (we use that the $f_i$ are central). We say the product is \emph{clean} if $g_2f_1=g_1f_2=0$.
\end{example}

\begin{lemma}\label{lem:product decomposition}
Every transit is isomorphic (non-canonically) to the clean product of a split transit and a null transit.
\end{lemma}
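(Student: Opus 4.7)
My plan is to split the composite $gf\colon \mathfrak{h}\to\mathfrak{k}$ and read off the decomposition from this splitting. Since $\mathfrak{h}$ and $\mathfrak{k}$ are Abelian Lie algebras, $gf$ is nothing more than a linear map of bigraded vector spaces, so it admits bigraded decompositions $\mathfrak{h}=\mathfrak{h}_1\oplus\mathfrak{h}_2$ and $\mathfrak{k}=\mathfrak{k}_1\oplus\mathfrak{k}_2$ with $\mathfrak{h}_2=\ker(gf)$, $\mathfrak{k}_1=\mathrm{im}(gf)$, and $gf$ restricting to an isomorphism $\mathfrak{h}_1\xrightarrow{\sim}\mathfrak{k}_1$. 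Let $\iota_j\colon\mathfrak{h}_j\hookrightarrow\mathfrak{h}$ and $\pi_j\colon\mathfrak{k}\twoheadrightarrow\mathfrak{k}_j$ denote the inclusions and projections furnished by these decompositions.

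Next, I would define two transits of $\mathfrak{g}$ by
\[
f_j:=f\circ\iota_j\colon\mathfrak{h}_j\to\mathfrak{g},\qquad g_j:=\pi_j\circ g\colon\mathfrak{g}\to\mathfrak{k}_j,
\]
for $j=1,2$. Each $f_j$ is central because $f$ is, and each $g_j$ is a map of Lie algebras because $g$ is and $\mathfrak{k}_j$ is Abelian; so each pair $(f_j,g_j)$ is indeed a transit. I would then run through the four verifications in turn: (i) $g_1 f_1=\pi_1\circ gf\circ\iota_1$ is a bijection by the choice of $\mathfrak{h}_1$ and $\mathfrak{k}_1$, so $(f_1,g_1)$ is split; (ii) $gf\iota_2=0$ since $\mathfrak{h}_2=\ker(gf)$, hence $g_2 f_2=\pi_2 gf\iota_2=0$ and $(f_2,g_2)$ is null; (iii) $g_1 f_2=\pi_1 gf\iota_2=0$ and $g_2 f_1=\pi_2 gf\iota_1=0$ (the latter because $gf\iota_1$ lands in $\mathfrak{k}_1$), so the product is clean; and (iv) under the identifications $\mathfrak{h}\cong\mathfrak{h}_1\oplus\mathfrak{h}_2$ and $\mathfrak{k}\cong\mathfrak{k}_1\oplus\mathfrak{k}_2$, the product transit $(f_1+f_2,(g_1,g_2))$ equals the original $(f,g)$.

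There really is no hard step here; the content is all in choosing the splittings correctly, and the force of the statement is that such a choice exists. The only point that is worth flagging is that all splittings must be taken in the category of bigraded vector spaces, but this is automatic since every short exact sequence of bigraded vector spaces over $\mathbb{Q}$ splits bidegree-wise. The non-canonicity asserted in the statement corresponds precisely to the choice of complements $\mathfrak{h}_1$ and $\mathfrak{k}_2$.
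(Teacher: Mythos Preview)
Your proof is correct and follows essentially the same approach as the paper: the paper likewise sets $\mathfrak{h}_2=\ker(gf)$, $\mathfrak{k}_1=\mathrm{im}(gf)$, and takes $\mathfrak{h}_1$ and $\mathfrak{k}_2$ to be chosen complements (writing, somewhat abusively, $\mathfrak{h}_1=\mathfrak{k}_1=\mathrm{im}(gf)$ and $\mathfrak{k}_2=\mathrm{coker}(gf)$), then observes that the resulting clean product decomposition holds by inspection. Your write-up simply makes the verifications (i)--(iv) explicit where the paper leaves them implicit.
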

\begin{proof}
Given a transit $(f,g)$ of $\mathfrak{g}$, set $\mathfrak{h}_1=\mathfrak{k}_1=\mathrm{im}(gf)$, $\mathfrak{h}_2=\ker(gf)$, and $\mathfrak{k}_2=\mathrm{coker}(gf)$. Choosing splittings, which are splittings of Lie algebras by Abelianness, we have the product decomposition \[\mathfrak{h}_1\times\mathfrak{h}_2\cong\mathfrak{h}\xrightarrow{f} \mathfrak{g}\xrightarrow{g} \mathfrak{k}\cong\mathfrak{k}_1\times\mathfrak{k}_2.\] To conclude the argument, we consider the following four composites:
\begin{align*}
\mathfrak{h}_1&\to \mathfrak{g}\to \mathfrak{k}_2\\
\mathfrak{h}_2&\to \mathfrak{g}\to \mathfrak{k}_1\\
\mathfrak{h}_2&\to \mathfrak{g}\to \mathfrak{k}_2\\
\mathfrak{h}_1&\to \mathfrak{g}\to \mathfrak{k}_1
\end{align*} The first is the restriction to $\mathrm{im}(gf)$ of the quotient map to $\coker(gf)$; the second is the restriction of $gf$ to $\ker(gf)$; and the third is the composite of the previous two. Since all three vanish, it follows that the product in question is clean and that the transit represented by the third composition is null. Since the fourth composite is the identity, the corresponding transit is split, and the proof is complete.
\end{proof}

\subsection{Transit algebras} The purpose of this section is to associate to each transit a naturally occurring algebra. As a matter of notation, we write $\langle -,-\rangle$ for the graded commutator of an algebra $\mathcal{A}$, i.e., $\langle x,y\rangle=xy-(-1)^{d(x)d(y)}yx$ for bihomogeneous elements $x,y\in\mathcal{A}$.

\begin{definition}\label{def:transit algebra}
The \emph{transit algebra} associated to the transit $\mathfrak{h}\xrightarrow{f}\mathfrak{g}\xrightarrow{g}\mathfrak{k}$ is the quotient \[W(f,g)=T(\mathfrak{h}[1]\oplus \mathfrak{k}[1]^\vee)/I,\] where $T$ denotes the tensor algebra and $I$ the two-sided ideal generated by the relations \begin{align*}\langle x,y\rangle&=\langle \lambda,\mu\rangle=0\\
\langle \lambda,x\rangle&=\lambda(g(f(x)))
\end{align*}
for $x,y\in \mathfrak{h}[1]$ and $\lambda,\mu\in \mathfrak{k}[1]^\vee$.
\end{definition}

Although the definition of $W(f,g)$ depends only on the composite $gf$, this algebra interacts in an important way with the Lie algebra $\mathfrak{g}$, as we show in the next section.

The transit algebra extends in the obvious way to a functor from transits to algebras. In particular, given an arbitrary transit $(f,g)$, there is a canonical map of algebras \[W(f,g)\to W(\mathfrak{g}),\] where $W(\mathfrak{g})$ is the transit algebra of the universal transit.

\begin{remark}
The transit algebra is functorial for maps between transits of \emph{different} Lie algebras, in the sense of the following commutative diagram:
\[
\xymatrix{
\mathfrak{h}\ar[d]\ar[r]&\mathfrak{g}\ar[d]\ar[r]&\mathfrak{k}\\
\mathfrak{h}'\ar[r]&\mathfrak{g}'\ar[r]&\mathfrak{k}'.\ar[u]
}
\] Note that this diagram specializes to our earlier definition of a map of transits when the middle arrow is the identity. We make no use of this extended functoriality.
\end{remark}

\begin{example}\label{example:null algebra}
If $(f,g)$ is null, then $W(f,g)=\Sym(\mathfrak{h}[1]\oplus \mathfrak{k}[1]^\vee)$.
\end{example}

\begin{example}\label{example:split algebra}
If $(f,g)$ is split, then $W(f,g)$ is isomorphic to the Weyl algebra on the vector space $\mathfrak{h}[1]$.
\end{example}

\begin{lemma}\label{lem:transit product}
The transit algebra of a clean product is canonically isomorphic to the tensor product of the transit algebras of the factors.
\end{lemma}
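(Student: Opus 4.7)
The plan is to exhibit mutually inverse homomorphisms between $W(f,g)$ and $W(f_1,g_1)\otimes W(f_2,g_2)$, where $(f,g)$ denotes the clean product of $(f_1,g_1)$ and $(f_2,g_2)$. Writing $\mathfrak{h}=\mathfrak{h}_1\oplus\mathfrak{h}_2$ and $\mathfrak{k}=\mathfrak{k}_1\oplus\mathfrak{k}_2$, the natural decomposition
\[\mathfrak{h}[1]\oplus\mathfrak{k}[1]^\vee\;\cong\;\bigoplus_{i=1,2}\bigl(\mathfrak{h}_i[1]\oplus\mathfrak{k}_i[1]^\vee\bigr)\]
induces algebra maps from the tensor algebra on the left into $W(f_1,g_1)\otimes W(f_2,g_2)$, and from each tensor algebra on the right into $W(f,g)$. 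The work lies in descending these to the quotients defining the transit algebras.

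For the forward map, I would verify that the defining relations of $W(f,g)$ hold in the tensor product. Commutators of two $\mathfrak{h}[1]$-generators (or two $\mathfrak{k}[1]^\vee$-generators) vanish either by the relations in a single factor, when both elements lie in the same $W(f_i,g_i)$, or by the Koszul-sign graded commutativity of the tensor product of algebras, when they lie in different factors. For a mixed commutator $\langle\lambda,x\rangle$ with $\lambda=(\lambda_1,\lambda_2)$ and $x=(x_1,x_2)$, expanding $gf=\sum_{i,j}g_i f_j$ and invoking cleanness ($g_if_j=0$ for $i\neq j$) yields $\lambda(g(f(x)))=\sum_i\lambda_i(g_if_ix_i)$. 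This matches the value of $\langle\lambda,x\rangle$ computed in $W(f_1,g_1)\otimes W(f_2,g_2)$, where cross-factor pieces graded-commute and same-factor pieces obey the relations of their respective $W(f_i,g_i)$.

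For the reverse map, the inclusions of generators extend to algebra maps $W(f_i,g_i)\to W(f,g)$ because the defining relations of $W(f_i,g_i)$ are special cases of those in $W(f,g)$. To glue these into a map out of the tensor product algebra, I must check that the images of $W(f_1,g_1)$ and $W(f_2,g_2)$ graded-commute inside $W(f,g)$; since graded commutators satisfy a Leibniz identity, it suffices to check this on generators. The commutators $\langle x_1,x_2\rangle$ and $\langle\lambda_1,\lambda_2\rangle$ vanish by the first two defining relations, while $\langle\lambda_i,x_j\rangle$ for $i\neq j$ equals $\lambda_i(g_if_jx_j)=0$ precisely because of cleanness. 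Since both constructed maps are the identity on the common set of generators, they are mutually inverse, yielding the desired isomorphism.

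The only substantive point — and the sole place where cleanness is used — is the vanishing of the cross terms $g_if_j$ for $i\neq j$; without this hypothesis, the two subalgebras would fail to graded-commute in $W(f,g)$ and the tensor decomposition would break down. Every other step is a formal verification of relations.
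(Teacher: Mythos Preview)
Your proposal is correct and follows the same approach as the paper: the paper's proof is the one-line observation that cleanness forces $\langle\lambda,x\rangle=0$ whenever $\lambda\in\mathfrak{k}_j[1]^\vee$ and $x\in\mathfrak{h}_i[1]$ with $i\neq j$, after which the tensor decomposition is declared to follow. You have simply spelled out the formal verification that the paper leaves implicit, and your identification of the cross-term vanishing as the unique substantive use of cleanness matches the paper exactly.
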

\begin{proof}
Cleanness implies that $\langle \lambda, x\rangle=0$ for $x\in \mathfrak{h}_i$ and $\lambda\in \mathfrak{k}_j$ when $i\neq j\in \{1,2\}$, and the claim follows.
\end{proof}

We record the following simple result, although we make no use of it in what follows.

\begin{proposition}\label{prop:noetherian}
Let $\mathfrak{h}\xrightarrow{f}\mathfrak{g}\xrightarrow{g}\mathfrak{k}$ be a transit with $\mathfrak{h}$ and $\mathfrak{k}$ finite dimensional. The transit algebra $W(f,g)$ is Noetherian.
\end{proposition}
\begin{proof}
By Lemmas \ref{lem:product decomposition} and \ref{lem:transit product} and Examples \ref{example:null algebra} and \ref{example:split algebra}, it suffices to show that the tensor product of a finitely generated symmetric algebra and a finitely generated Weyl algebra is Noetherian. Both algebras are examples of skew polynomial rings in the sense of \cite[Thm. 2.9]{McConnellRobson:NNR}, which implies that iterated skew polynomial rings over a Noetherian base ring (such as $\mathbb{Q}$) are Noetherian.
\end{proof}

\subsection{Action on homology} The role of the transit algebras introduced above is as a source of extra structure on the homology of Lie algebras.

\begin{definition}\label{def:liehomology}
The \emph{Chevalley--Eilenberg complex} of the Lie algebra $\mathfrak{g}$ is the bigraded vector space $\mathrm{CE}(\mathfrak{g})=\Sym(\mathfrak{g}[1])$ equipped with the differential $\partial$ determined as a coderivation by the formula \[\partial( x y)=(-1)^{d(x)} [x,y].\]
The \emph{Lie algebra homology} of $\mathfrak{g}$ is the bigraded vector space $H_*^\mathrm{Lie}(\mathfrak{g})=H_*(\mathrm{CE}(\mathfrak{g}), \partial)$.
\end{definition}

The reader interested in an explicit formula for $\partial$ may consult \cite[Ch. 22]{FelixHalperinThomas:RHT}, for example. Since it plays no direct role in our arguments, we omit it as an unnecessary distraction.

\begin{remark}
Our notation is abusive in that the symbols $x$, $y$, and $[x,y]$ refer to the corresponding elements of $\mathfrak{g}[1]$, while $d(x)$ is the degree of $x$ as an element of $\mathfrak{g}$.
\end{remark}

The implicit claim that $\partial^2=0$ is equivalent to the Jacobi identity (2) above. Note that $\partial\equiv0$ if $\mathfrak{g}$ is Abelian, whence $\mathrm{CE}(\mathfrak{g})=\Sym(\mathfrak{g}[1])$ as a chain complex.

\begin{theorem}\label{thm:transit action}
Let $\mathfrak{g}$ be a Lie algebra, and suppose that $\mathfrak{g}$ is bounded below of finite type as a bigraded vector space. For any transit $(f,g)$ of $\mathfrak{g}$, there is a canonical, functorial action by chain maps of $W(f,g)$ on $\mathrm{CE}(\mathfrak{g})$. In particular, there is a canonical, functorial action on $H_*^\mathrm{Lie}(\mathfrak{g})$.
\end{theorem}
\begin{proof}
It suffices to consider the universal case. By Abelianness, addition equips $\mathfrak{z}$ with the structure of a commutative monoid with respect to the Cartesian monoidal structure on Lie algebras. For the same reason, the composite map \[\mathfrak{z}\times\mathfrak{g}\subseteq \mathfrak{g}\times\mathfrak{g}\xrightarrow{+} \mathfrak{g}\] is a map of Lie algebras, equipping $\mathfrak{g}$ with the structure of a $\mathfrak{z}$-module. The Chevalley--Eilenberg complex is a symmetric monoidal functor, so we obtain an action of $\mathrm{CE}(\mathfrak{z})=\Sym(\mathfrak{z}[1])$ on $\mathrm{CE}(\mathfrak{g})$, hence on its homology (we again use that $\mathfrak{z}$ is Abelian).

On the other hand, the diagonal equips any Lie algebra with the structure of cocommutative comonoid with respect to the Cartesian monoidal structure, and the composite map \[\mathfrak{g}\xrightarrow{\Delta} \mathfrak{g}\times\mathfrak{g}\xrightarrow{\pi\times\id}\mathfrak{a}\times\mathfrak{g}\] equips $\mathfrak{g}$ with the structure of an $\mathfrak{a}$-comodule, where $\pi$ is the projection to the quotient. Applying the Chevalley--Eilenberg complex and dualizing, we obtain by adjunction an action of $\Sym(\mathfrak{a}[1]^\vee)\cong \mathrm{CE}(\mathfrak{a})^\vee$ on $\mathrm{CE}(\mathfrak{g})$, hence on its homology (we use that $\mathfrak{a}$ is Abelian and $\mathfrak{g}$ bounded below of finite type).

It remains to check that these two actions descend to an action of $W(\mathfrak{g})$, for which it suffices to verify the last relation. To begin, we note that, at the level of underlying graded objects, the first action is given by the composite \[\Sym(\mathfrak{z}[1])\otimes \Sym(\mathfrak{g}[1])\subseteq \Sym(\mathfrak{g}[1])\otimes \Sym(\mathfrak{g}[1])\xrightarrow{m}\Sym(\mathfrak{g}[1]),\] where $m$ is the usual multiplication (note that $m$ is not itself a chain map), while the second action is given by the composite (cap product) \[\Sym(\mathfrak{a}[1]^\vee)\otimes\Sym(\mathfrak{g}[1])\xrightarrow{\Sym(\pi^\vee)\otimes\delta} \Sym(\mathfrak{g}[1]^\vee)\otimes \Sym(\mathfrak{g}[1])\otimes \Sym(\mathfrak{g}[1])\xrightarrow{\mathrm{ev}\cdot\id} \Sym(\mathfrak{g}[1]),\] where $\delta$ is the usual comultiplication.

Choose a bihomogeneous basis for $\mathfrak{g}$, and fix nonzero bihomogeneous elements $x\in \mathfrak{z}[1]$ and $\lambda\in\mathfrak{a}[1]^\vee$ and a monomial $p\in\Sym(\mathfrak{g}[1])$. Writing $\delta(p)=\sum_i p_i\otimes p_i'$, and abusively identifying $\lambda$ and $\Sym(\pi^\vee)(\lambda)$, we calculate from the definitions that \begin{align*}
\langle \lambda, x\rangle \cdot p&=\lambda\cdot (x p)-(-1)^{d(\lambda)d(x)}x(\lambda\cdot p)\\
&=(\lambda\otimes 1)(\delta(xp))-(-1)^{d(\lambda)d(x)}x(\lambda\otimes 1)(\delta(p))\\
&=\sum_i\left[(\lambda\otimes 1)\left(xp_i\otimes p_i'+(-1)^{d(x)d(p_i)}p_i\otimes xp_i'\right)-(-1)^{d(\lambda)d(x)}x(\lambda\otimes 1)(p_i\otimes p_i') \right]\\
&=\sum_i\left[\lambda(xp_i) p_i'+(-1)^{d(x)d(p_i)}\lambda(p_i) xp_i'-(-1)^{d(\lambda)d(x)}\lambda(p_i)xp_i'\right]\\
&=\lambda(x)p+\sum_i\left[(-1)^{d(x)d(p_i)}\lambda(p_i) xp_i'-(-1)^{d(\lambda)d(x)}\lambda(p_i)xp_i'\right]\\
&=\lambda(x)p+\sum_{i: d(\lambda)+d(p_i)=0}\left[(-1)^{d(x)d(p_i)}\lambda(p_i) xp_i'-(-1)^{d(\lambda)d(x)}\lambda(p_i)xp_i'\right]\\
&=\lambda(x)p,
\end{align*} where we use that $\lambda$ vanishes on monomials of polynomial degree different from $1$ and on those of bidegree different from $(-w(\lambda),-d(\lambda))$. Since $p$ was arbitrary, this calculation establishes the desired relation.
\end{proof}

\section{Stability phenomena}

In this section, we give examples of stability phenomena in Lie algebra homology arising from the transit algebra action introduced above. We then deduce our results about the homology of configuration spaces.

\subsection{Transit algebras and stability}

The purpose of this section is to demonstrate that the transit algebra actions described in Theorem \ref{thm:transit action} give rise to stability phenomena in the homology of Lie algebras. For simplicity, we work under the following standing assumptions.

\begin{assumption}\label{assumption:lie}
The Lie algebra $\mathfrak{g}$ is finite dimensional with $\mathfrak{g}[1]$ first-quadrant and connected.
\end{assumption}

There are two versions of the following result. We state the version for the maximal slope; the version for the minimal slope is obtained by reversing inequalities and interchanging $\min$ and $\max$.  Throughout, a \emph{ray} in $H_*(\fg)$ is with respect to the bigraded structure of Definition \ref{def:liehomology}.

\begin{proposition}
	\label{prop:lie stability}
Let $g: \fg \to \fk$ be an Abelian quotient, and suppose that we have the inequalities \begin{align*}
m_{\max}(\fk[1]^0)&\leq C\\
m_{\max}(\ker g[1]^0)&<C.
\end{align*} Each ray of slope $C$ in $H_*^\mathrm{Lie}(\mathfrak{g})$ is finitely detected over $ \Sym(\mathfrak{k}^\vee[-1]^0_C ) \subseteq W(0,g).$ In particular, the graded dimension of such a ray is eventually equal to a quasi-polynomial of degree at most $\dim (\mathfrak{k}[1]^0_C)-1$ and period dividing $\mathrm{lcm}(\mathfrak{k}[1]^0_C)$.
\end{proposition}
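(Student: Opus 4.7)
The plan is to verify finite detection at the chain level and then deduce the quasi-polynomial growth from Lemma \ref{lem:dual module growth}. The null transit $0\to \mathfrak{g}\xrightarrow{g}\mathfrak{k}$ has transit algebra $\Sym(\mathfrak{k}^\vee[-1])$ by Example \ref{example:null algebra}, so Theorem \ref{thm:transit action} produces an action on $H_*^\mathrm{Lie}(\mathfrak{g})$; restricting to the subalgebra $\Sym(\mathfrak{k}^\vee[-1]^0_C)$ gives the action in question. The construction exhibits this action as the restriction of a chain-level action on $\mathrm{CE}(\mathfrak{g})$, obtained by applying the symmetric monoidal functor $\Sym(\cdot[1])$ to the Lie algebra map $(g,\mathrm{id}):\mathfrak{g}\to \mathfrak{k}\oplus \mathfrak{g}$. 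In particular, each ray of slope $C$ in $H_*^\mathrm{Lie}(\mathfrak{g})$ is a $\Sym(\mathfrak{k}^\vee[-1]^0_C)$-module subquotient of the corresponding ray in $\mathrm{CE}(\mathfrak{g})$, so by Lemma \ref{lem:finite detection subquotient} it suffices to prove that the chain-level rays are finitely detected.

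To make this chain-level action concrete, fix a bigraded vector space splitting $s:\mathfrak{k}\to \mathfrak{g}$ of $g$, giving an isomorphism $\mathrm{CE}(\mathfrak{g})\cong \Sym(\ker g[1])\otimes \Sym(\mathfrak{k}[1])$ of bigraded vector spaces. A short computation---using $\delta(u)=1\otimes u$ for $u\in \ker g[1]$ and $\delta(s(\bar v))=\bar v\otimes 1+1\otimes s(\bar v)$ for $\bar v\in \mathfrak{k}[1]$---shows that $\Sym(\mathfrak{k}^\vee[-1])$ acts trivially on the $\Sym(\ker g[1])$ factor and by the standard dual adjoint contraction on the $\Sym(\mathfrak{k}[1])$ factor. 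Consequently, a ray $R$ of slope $C$ and offset $i_0$ in $\mathrm{CE}(\mathfrak{g})$ decomposes as a $\Sym(\mathfrak{k}^\vee[-1]^0_C)$-module into the direct sum, over bidegrees $(a_1,b_1)$ appearing in $\Sym(\ker g[1])$, of $\Sym(\ker g[1])_{a_1,b_1}\otimes R^{(a_1,b_1)}$, where $R^{(a_1,b_1)}$ is a shifted ray of slope $C$ in $\Sym(\mathfrak{k}[1])$.

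The main technical step, which I expect to require the most care, is showing that only finitely many $(a_1,b_1)$ contribute nontrivially. After factoring off the finite-dimensional exterior contributions from $\Sym(\mathfrak{g}[1]^1)$, any nonzero pure-tensor bidegree $(a_1,b_1)\otimes(a_2,b_2)$ contributing to $R$ satisfies $(Ca_1-b_1)+(Ca_2-b_2)=-i_0$; the inequalities $m_{\max}(\ker g[1]^0)<C$ and $m_{\max}(\mathfrak{k}[1]^0)\leq C$ force both summands to be nonnegative, and the stronger bound $Ca_1-b_1\geq (C-m_{\max}(\ker g[1]^0))\,a_1$ then traps $a_1$, and hence $(a_1,b_1)$, in a finite range. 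A parallel finiteness argument inside $\Sym(\mathfrak{k}[1]^0)=\Sym(\mathfrak{k}[1]^0_C)\otimes \Sym(\mathfrak{k}[1]^0_{<C})$, using that $m_{\max}(\mathfrak{k}[1]^0_{<C})<C$, further cuts each $R^{(a_1,b_1)}$ down to a finite direct sum of shifted rays inside $\Sym(\mathfrak{k}[1]^0_C)$. The latter is finitely detected over $\Sym(\mathfrak{k}^\vee[-1]^0_C)$ by the augmentation---for any nonzero monomial $x^\alpha\in \Sym(\mathfrak{k}[1]^0_C)$, the operator $\partial_{x^\alpha}$ produces a nonzero constant---which closes the reduction; Lemma \ref{lem:dual module growth} applied with $V=\mathfrak{k}[1]^0_C$ then yields the asserted degree bound $\dim\mathfrak{k}[1]^0_C-1$ and period dividing $\mathrm{lcm}(\mathfrak{k}[1]^0_C)$.
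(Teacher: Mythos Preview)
Your proposal is correct and follows the same overall strategy as the paper: reduce to showing finite detection of rays in $\mathrm{CE}(\mathfrak{g})$ via Lemma~\ref{lem:finite detection subquotient}, establish this using the slope hypotheses, and conclude with Lemma~\ref{lem:dual module growth}. The difference lies in how the chain-level step is executed. You choose a linear splitting of $g$, decompose $\mathrm{CE}(\mathfrak{g})\cong \Sym(\ker g[1])\otimes\Sym(\mathfrak{k}[1])$ explicitly, and carry out the slope combinatorics by hand to show that each ray is a finite sum of shifts of rays in $\Sym(\mathfrak{k}[1]^0_C)$, which are finitely detected by the augmentation. The paper instead passes to the dual via Lemma~\ref{lem:finite detection adjoint}, observes that $\mathrm{CE}(\mathfrak{g})^\vee$ is free of rank one over the larger (non-differential-compatible) algebra $\Sym(\mathfrak{g}^\vee[-1])$, and then invokes Lemma~\ref{lem:finitely generated rays}---which packages exactly the slope combinatorics you unroll---to obtain finite generation over $\Sym(\mathfrak{g}^\vee[-1]^0_C)=\Sym(\mathfrak{k}^\vee[-1]^0_C)$. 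Your argument is more self-contained but effectively reproves Lemma~\ref{lem:finitely generated rays} on the comodule side; the paper's version is shorter because it reuses that lemma and avoids choosing a splitting.
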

\begin{proof}
Any ray of slope $C$ in $H_*^\mathrm{Lie}(\mathfrak{g})$ is a $\Sym(\mathfrak{k}^\vee[-1]_C^0)$-submodule and a subquotient of the corresponding ray in $\mathrm{CE}(\mathfrak{g})$; therefore, by Lemmas \ref{lem:finite detection adjoint}, \ref{lem:finite detection subquotient}, and \ref{lem:dual module growth}, it suffices to show that every ray of slope $C$ in the dual adjoint module $\mathrm{CE}(\mathfrak{g})^\vee$ is finitely generated.

The action of $\Sym(\mathfrak{k}^\vee[-1]^0_C)$ on $\rC\rE(\fg)^\vee$, which respects the differential, extends along the monomorphism $g^\vee$ to an action of $\Sym(\mathfrak{g}^\vee[-1])$, which does not respect the differential.  Under this action, $\mathrm{CE}(\mathfrak{g})^\vee$ is free of rank $1$, hence finitely generated. Since $m_{\max}(\mathfrak{g}^\vee[-1]^0)\leq C$, it follows from Lemma \ref{lem:finitely generated rays} that each ray of slope $C$ is finitely generated over $\Sym(\mathfrak{g}^\vee[-1]_C^0)$.  Our assumption implies that $\mathfrak{g}[1]^0_C\cap \ker g [1]^0=0$, so $\mathfrak{g}^\vee[-1]^0_C=\mathfrak{k}^\vee[-1]^0_C$.    
\end{proof}

In a sense, the next result is dual to Proposition \ref{prop:lie stability}. Again, there are two versions, only one of which we state.

\begin{proposition}\label{prop:lie extremal}
Let $f: \fh \to \fg$ be a central subalgebra, and suppose that we have the inequalities \begin{align*}
m_{\max}(\fh[1]^0)&\leq C\\
m_{\max}(\coker f[1]^0)&<C.
\end{align*} Each ray of slope $C$ in $H_*^\mathrm{Lie}(\mathfrak{g})$ is finitely generated over $ \Sym(\mathfrak{h}[1]^0_C ) \subseteq W(f,0).$ In particular, the graded dimension of such a ray is eventually equal to a quasi-polynomial of degree at most $\dim (\mathfrak{h}[1]^0_C)-1$ and period dividing $\mathrm{lcm}(\mathfrak{h}[1]^0_C)$.
\end{proposition}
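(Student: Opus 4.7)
The plan is to dualize the proof of Proposition~\ref{prop:lie stability}: replace the comultiplicative half of the transit algebra action with its multiplicative counterpart, which by Theorem~\ref{thm:transit action} realizes $\Sym(\fh[1]^0_C) \subseteq W(f,0)$ as acting on $\mathrm{CE}(\fg) = \Sym(\fg[1])$ by multiplication through $f$. Because this action commutes with the Chevalley--Eilenberg differential (by centrality of $\fh$), one can argue directly with modules, and Noetherianity of $\Sym(\fh[1]^0_C)$ plays the role occupied by the finite-detection machinery on the comodule side.

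The first step is to reduce to $\mathrm{CE}(\fg)$. A ray of slope $C$ in $H_*^\mathrm{Lie}(\fg)$ is a $\Sym(\fh[1]^0_C)$-submodule (multiplication by a slope-$C$ element preserves the ray) and a subquotient of the corresponding ray in $\mathrm{CE}(\fg)$, so by Noetherianity it suffices to show that each ray of slope $C$ in $\mathrm{CE}(\fg)$ is finitely generated over $\Sym(\fh[1]^0_C)$. I would then observe that the $\Sym(\fh[1]^0_C)$-action on $\mathrm{CE}(\fg)$ factors through $\Sym(f)\colon \Sym(\fh[1]) \to \Sym(\fg[1])$, followed by the free rank-one multiplication action of $\Sym(\fg[1])$ on itself. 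A splitting $\fg[1] \cong \fh[1] \oplus \coker f[1]$ and the hypotheses then give $m_{\max}(\fg[1]^0) \leq C$, so Lemma~\ref{lem:finitely generated rays} applied with $V = \fg[1]$ produces finite generation of each slope-$C$ ray over $\Sym(\fg[1]^0_C)$. The strict inequality on the cokernel forces $\coker f[1]^0_C = 0$, so via the splitting $\fg[1]^0_C$ is identified with $\fh[1]^0_C$, and finite generation transfers to $\Sym(\fh[1]^0_C)$ as required.

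The ``In particular'' statement is then an immediate application of Lemma~\ref{lem:finitely generated growth} taking $V = \fh[1]^0_C$---finite-dimensional, first-quadrant, of pure slope $C$, and concentrated in even degree, so $V = V^0$ with $\dim V = \dim \fh[1]^0_C$ and $\mathrm{lcm}(V^0) = \mathrm{lcm}(\fh[1]^0_C)$. I do not anticipate a substantive obstacle; the only point requiring care is the bookkeeping confirming that at slope $C$ the bridge goes the correct direction---here via the isomorphism $\fh[1]^0_C \xrightarrow{\sim} \fg[1]^0_C$ induced by the inclusion $f$, whereas in the proof of Proposition~\ref{prop:lie stability} the bridge was the monomorphism $g^\vee$ into $\fg^\vee[-1]^0_C$.
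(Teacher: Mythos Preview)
Your proposal is correct and follows essentially the same route as the paper: reduce to rays in $\mathrm{CE}(\fg)$ via Noetherianity, extend the $\Sym(\fh[1]^0_C)$-action to the free rank-one $\Sym(\fg[1])$-action, apply Lemma~\ref{lem:finitely generated rays} using $m_{\max}(\fg[1]^0)\leq C$, and then identify $\fg[1]^0_C$ with $\fh[1]^0_C$ via the strict inequality on the cokernel. The paper's write-up is terser but structurally identical, and your explicit invocation of Lemma~\ref{lem:finitely generated growth} for the ``In particular'' clause matches what the paper folds into its first sentence.
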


\begin{proof}
Since each ray of slope $C$ in $\mathrm{CE}(\mathfrak{g})$ is a submodule with subquotient the corresponding ray of slope $C$ in $H_*^\mathrm{Lie}(\mathfrak{g})$, it suffices by Noetherianity and Lemma \ref{lem:finitely generated growth} to show that each ray of slope $C$ in $\mathrm{CE}(\mathfrak{g})$ is finitely generated.

The action of $\Sym(\mathfrak{h}[1]_C^0)$, which respects the differential, extends along the inclusion of $\mathfrak{h}$ to an action of $\Sym(\mathfrak{g}[1])$, which does not respect the differential. Under this action, $\mathrm{CE}(\mathfrak{g})$ is free of rank $1$, hence finitely generated. Since $m_{\max}(\mathfrak{g}[1]^0)\leq C$, it follows from Lemma \ref{lem:finitely generated rays} that each ray of slope $C$ is finitely generated over $\Sym(\mathfrak{g}[1]_C^0)$, but $\mathfrak{g}[1]^0_C=\mathfrak{h}[1]^0_C$ by our assumption.  
\end{proof}

The same method yields the following mild extension.

\begin{corollary}\label{cor:semidirect product}
Let $\mathfrak{g}$ be as in Proposition \ref{prop:lie extremal} and consider a semidirect product $\tilde{\mathfrak{g}}=\mathfrak{g}\rtimes \mathfrak{l}$ with $\mathfrak{l}$ free and finitely generated. If $\mathfrak{l}$ centralizes $\mathfrak{h}[1]_C^0$, then the conclusion of Proposition \ref{prop:lie extremal} holds for $\tilde{\mathfrak{g}}$.
\end{corollary}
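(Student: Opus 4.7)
Since Proposition \ref{prop:lie extremal} already handles $\fg$, the plan is to use the Hochschild--Serre spectral sequence for the split extension $\fg\to\tilde{\fg}\to\fl$ to bootstrap to $\tilde{\fg}$. The centralization hypothesis makes $\fh[1]^0_C$ a central bigraded subspace of $\tilde{\fg}$, so Theorem \ref{thm:transit action} yields a $\Sym(\fh[1]^0_C)$-action on $H_*^{\mathrm{Lie}}(\tilde{\fg})$ that restricts to the one on $H_*^{\mathrm{Lie}}(\fg)$ and commutes with the induced $\fl$-action.

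Since $\fl$ is free on a finite-dimensional bigraded $V$, its enveloping algebra is the tensor algebra $T(V)$, and there is a length-one resolution
\[ 0\to U\fl\otimes V\to U\fl\to\bbQ\to 0. \]
Consequently $H_p^{\mathrm{Lie}}(\fl,-)$ vanishes for $p\geq 2$, so the Hochschild--Serre spectral sequence degenerates at $E_2$ and produces short exact sequences
\[ 0\to H_0^{\mathrm{Lie}}(\fl,H_q^{\mathrm{Lie}}(\fg))\to H_q^{\mathrm{Lie}}(\tilde{\fg})\to H_1^{\mathrm{Lie}}(\fl,H_{q-1}^{\mathrm{Lie}}(\fg))\to 0 \]
of bigraded $\Sym(\fh[1]^0_C)$-modules, with $H_0^{\mathrm{Lie}}(\fl,M)=M/\fl M$ and $H_1^{\mathrm{Lie}}(\fl,M)=\ker(V\otimes M\to M)$.

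By Proposition \ref{prop:lie extremal}, every ray of slope $C$ in $H_*^{\mathrm{Lie}}(\fg)$ is finitely generated over $\Sym(\fh[1]^0_C)$. The corresponding ray in the left-hand term of the above sequence is a quotient, hence finitely generated. The ray in $V\otimes H_*^{\mathrm{Lie}}(\fg)$ is, after choosing a bihomogeneous basis of $V$, a finite direct sum of shifts of rays of slope $C$ in $H_*^{\mathrm{Lie}}(\fg)$, hence finitely generated; by Noetherianity of $\Sym(\fh[1]^0_C)$, the kernel is finitely generated as well. The middle ray is an extension of the outer two, hence also finitely generated, and Lemma \ref{lem:finitely generated growth} supplies the quasi-polynomial dimension count.

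The main obstacle I anticipate is the bookkeeping for the Hochschild--Serre spectral sequence in our bigraded conventions and verifying that the $\Sym(\fh[1]^0_C)$-action is compatible with the filtration and the differentials; both are standard in principle but require attention to the conventions. If one instead prefers to stay closer to the CE-complex argument of Proposition \ref{prop:lie extremal}, one can try to extend the action of $\Sym(\fh[1]^0_C)$ to $\Sym(\tilde{\fg}[1])$ and apply Lemma \ref{lem:finitely generated rays}, using the elementary slope inequality $(d_1+d_2+1)/(w_1+w_2)<\max((d_1+1)/w_1,(d_2+1)/w_2)$ to show that only the generators of $\fl$ can contribute slope-$C$ elements to $\fl[1]$; this direct route, however, runs into the subtlety that mixed-parity brackets can produce slope-$C$ elements in $\fl[1]^0$ even when none of the generators do so, which the Hochschild--Serre approach sidesteps entirely.
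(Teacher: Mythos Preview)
Your proposal is correct and follows essentially the same route as the paper: both arguments use the Lyndon--Hochschild--Serre spectral sequence for $\fg\to\tilde{\fg}\to\fl$, exploit freeness of $\fl$ to reduce the $E_2$-page to the two-term complex $H_*^{\mathrm{Lie}}(\fg)\otimes V\to H_*^{\mathrm{Lie}}(\fg)$, and then use Noetherianity to pass finite generation from finite sums of shifted rays in $H_*^{\mathrm{Lie}}(\fg)$ to the abutment. The paper sets up the $\Sym(\fh[1]^0_C)$-action at the level of $\mathrm{CE}(\tilde{\fg})$ and checks compatibility with the polynomial-degree-in-$\fl$ filtration directly (the bookkeeping you flagged), whereas you invoke Theorem~\ref{thm:transit action} and the explicit $E_2$-degeneration; these are cosmetic differences.
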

\begin{proof}
The action of $\Sym(\mathfrak{h}[1]^0_C)$ on $\mathrm{CE}(\mathfrak{g})$ described in the proof of Proposition \ref{prop:lie extremal} extends to an action on the underlying bigraded vector space of $\mathrm{CE}(\tilde{\mathfrak{g}})$, the latter being a sum of bigraded shifts of the former. Our assumption on the relationship between $\mathfrak{l}$ and $\mathfrak{h}$ guarantees that this action is compatible with the differential, and the homological Lyndon--Hochschild--Serre spectral sequence \cite[Ch. 34]{FelixHalperinThomas:RHT} \[H_*^\mathrm{Lie}(\mathfrak{l};H_*^\mathrm{Lie}(\mathfrak{g}))\implies H_*^\mathrm{Lie}(\tilde{\mathfrak{g}})\] is a spectral sequence of $\Sym(\mathfrak{h}[1]^0_C)$-modules.\footnote{For information on Lie algebra homology with coefficients, the reader may consult \cite[Ch. 22]{FelixHalperinThomas:RHT}.} By Noetherianity and Lemma \ref{lem:finitely generated growth}, it suffices to show that each ray of slope $C$ in the initial page of this spectral sequence is finitely generated.

Writing $V$ for the finite dimensional bigraded vector space generating $\mathfrak{l}$ as a free Lie algebra, the page in question is isomorphic to $\mathrm{Tor}^{T(V)}_*(H_*^\mathrm{Lie}(\mathfrak{g}),\mathbb{Q})$, which may be calculated as the homology of the complex \[H_*^\mathrm{Lie}(\mathfrak{g})\otimes V\to H_*^\mathrm{Lie}(\mathfrak{g})\] of $\Sym(\mathfrak{h}[1]^0_C)$-modules. Each ray of slope $C$ in this complex is a finite sum of bigraded shifts of rays of slope $C$ in $H_*^\mathrm{Lie}(\mathfrak{g})$, each of which has already been shown to be finitely generated. By Noetherianity of $\Sym(\mathfrak{h}[1]^0_C)$,  the homology of this complex is finitely generated. This completes the proof.
\end{proof}

\begin{remark}\label{remark:iterated semidirect product}
The same method of proof establishes the analogous conclusion for iterated semidirect products with free Lie algebras. Details are left to the reader.
\end{remark}

Our final corollary uses the action of both halves of the transit algebra to establish that rays are free modules in certain cases.  

\begin{corollary}\label{thm:free}
		Let $\fh \xrightarrow{f} \fg \xrightarrow{g} \fk$  be a split transit, and suppose that we have the inequalities \begin{align*}
m_{\max}(\fh[1]^0)&\leq C\\
m_{\max}(\coker f[1]^0)&<C.
\end{align*} Each ray of slope $C$ of $H^\mathrm{Lie}_*(\fg)$ is a finitely generated free module over $\Sym(\fh[1]^0_C) \subseteq W(f,g)$.
\end{corollary}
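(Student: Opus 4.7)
The plan is to realize $\mathcal{R}$ as a Fock-style module over the Weyl subalgebra of $W(f,g)$ generated by $V := \fh[1]^0_C$ and $V^\vee := \fk^\vee[-1]^0_C$ (identified via the iso $gf$). Splitness of the transit says $gf:\fh\to\fk$ is bijective, so the Weyl relation $\langle\lambda,x\rangle=\lambda(gf(x))$ supplies a perfect pairing $V\otimes V^\vee\to\bbQ$, and this subalgebra is genuinely the Weyl algebra $\mathcal{W}(V)$. The splitting also yields a decomposition $\fg = f(\fh)\oplus\ker g$ of bigraded vector spaces, hence identifications $\fk\cong\fh$ and $\ker g\cong\coker f$. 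Under these, the stated hypotheses become exactly those of Propositions \ref{prop:lie extremal} and \ref{prop:lie stability}, whence $\mathcal{R}$ is finitely generated over $\Sym(V)$ and finitely detected over $\Sym(V^\vee)$.

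Set $P := \{m\in\mathcal{R} : V^\vee\cdot m = 0\}$. Because $\mathcal{R}$ is bounded below (first-quadrant) and $V^\vee$ strictly lowers weight, the action of $V^\vee$ on $\mathcal{R}$ is locally nilpotent. The finite-detecting surjection $q:\mathcal{R}\to N$ restricts to an injection on $P$: for $m\in P$ and any positive-degree monomial $p\in\Sym(V)$, the dual action satisfies $\partial_p\cdot m = 0$, so finite detection forces $q(m)\neq 0$ whenever $m\neq 0$. Hence $\dim P \leq \dim N < \infty$. Now consider the natural map
\[\phi : \Sym(V)\otimes P \to \mathcal{R},\qquad p\otimes\pi\mapsto p\cdot\pi.\]
Equipping the source with the Fock-style $\mathcal{W}(V)$-action $\lambda\cdot(p\otimes\pi) = \partial_\lambda(p)\otimes\pi$, the Weyl-algebra identity $\lambda\cdot(p\cdot\pi) = \partial_\lambda(p)\cdot\pi$ (valid whenever $\pi$ is primitive) makes $\phi$ equivariant. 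Injectivity of $\phi$ is a routine PBW induction: expressing a putative minimal-degree element of the kernel as $\sum_J v^J\Pi_J$ with $\Pi_J\in P$ and applying the Weyl-dual of the top monomial yields a linear relation among the $\Pi_J$, contradicting their linear independence.

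Surjectivity is the heart of the argument. Let $C := \coker(\phi)$, a $\mathcal{W}(V)$-module on which $V^\vee$ is locally nilpotent. If $C\neq 0$, the socle of a finite-dimensional $\Sym(V^\vee)$-submodule of $C$ produces a nonzero primitive $\bar m\in C^{V^\vee}$; lift to $m\in\mathcal{R}$, so that $\lambda m\in\mathrm{im}(\phi)$ for every $\lambda\in V^\vee$. Writing $\lambda m = \sum_i p_i(\lambda)\cdot\pi_i$ in a fixed basis $\{\pi_i\}$ of $P$, the equality $\mu\lambda m = \lambda\mu m$, combined with the Weyl relation and linear independence of the $\pi_i$, yields the closedness condition $\partial_\mu p_i(\lambda) = \partial_\lambda p_i(\mu)$ for each $i$; that is, each assignment $\lambda\mapsto p_i(\lambda)$ is a closed $1$-form on $\Sym(V)$. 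The algebraic Poincaré lemma (acyclicity of the polynomial de Rham complex on $V$) supplies $q_i\in\Sym(V)$ with $\partial_\lambda q_i = p_i(\lambda)$, and then $m - \sum_i q_i\cdot\pi_i \in P\subseteq\mathrm{im}(\phi)$ maps to $\bar m$ in $C$, forcing $\bar m = 0$. This contradiction shows $\phi$ is an isomorphism, so $\mathcal{R}\cong\Sym(V)\otimes P$ is finitely generated and free over $\Sym(\fh[1]^0_C)$. The crux throughout is the lifting of primitives from $C$ to $\mathcal{R}$, which pivots on the closedness calculation and the Poincaré lemma; everything else reduces to the earlier propositions combined with standard Weyl-algebra bookkeeping.
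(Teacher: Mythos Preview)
Your proof is correct, but it takes a longer and more constructive route than the paper's. The paper simply observes that finite generation is Proposition~\ref{prop:lie extremal}, that the split transit furnishes a Weyl algebra action extending the $\Sym(\fh[1]^0_C)$-action (Example~\ref{example:split algebra} and Theorem~\ref{thm:transit action}), and then invokes the well-known fact that a finitely generated graded module over a polynomial ring whose action extends to the Weyl algebra is free; the one-line justification is that differentiating a minimal-degree relation among a minimal generating set produces a relation of strictly smaller degree. You instead build the Fock-style isomorphism $\Sym(V)\otimes P\to\mathcal{R}$ directly, proving surjectivity via the algebraic Poincar\'e lemma. This buys an explicit description of the free basis (the primitives $P=\mathcal{R}^{V^\vee}$), at the cost of a longer argument. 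Note also that your appeal to Proposition~\ref{prop:lie stability} to bound $\dim P$ is unnecessary: once $\phi$ is shown to be an isomorphism, finite generation of $\mathcal{R}$ over $\Sym(V)$ (already supplied by Proposition~\ref{prop:lie extremal}) forces $P$ to be finite dimensional.
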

\begin{proof}
Finite generation follows from Proposition \ref{prop:lie extremal}. By Example \ref{example:split algebra} and Theorem \ref{thm:transit action}, the action of $\Sym(\fh[1]^0_C)$ extends to an action of the Weyl algebra on $\fh[1]^0_C$. It is well known that a finitely generated module over a polynomial ring whose action extends to the Weyl algebra is free---briefly, choosing a minimal set of generators and a relation $R$ of minimal degree, we note that, unless $R$ is constant (contradicting the first requirement of minimality), the derivative of $R$ is a relation of strictly smaller degree. Here $R$ is an inhomogeneous relation in the generators, and its degree is the maximum of the degrees of its nonzero homogeneous terms, and a constant relation is one of degree $0$.
\end{proof}

\subsection{Application to configuration spaces}\label{section:applications}

Fix a $d$-manifold $M$, and write \[\mathfrak{g}_M=H_c^{*}(M;\mathbb{Q}^w)\otimes v\oplus H_c^*(M;\mathbb{Q})\otimes [v,v],\] where $v$ and $[v,v]$ are formal parameters of bidegree $(1,d-1)$ and $(2,2d-2)$, respectively, and cohomology is regarded as concentrated in negative degrees and weight $0$ (the reader is reminded that $\mathbb{Q}^w$ denotes the orientation sheaf). This bigraded vector space becomes a Lie algebra by declaring that the only nonzero components of the bracket are given by the equation \[[\alpha\otimes v,\beta\otimes v]=(-1)^{d(\beta)(d-1)}\alpha\beta\otimes [v,v],\] where $\alpha$ and $\beta$ are multiplied via the twisted cup product.

\begin{theorem}[{\cite{Knudsen}}]\label{thm:lie homology}
Let $M$ be a manifold of even dimension $d$. There is an isomorphism of bigraded vector spaces \[\bigoplus_{n\geq0} H_*(B_n(M);\mathbb{Q})\cong H_*^\mathrm{Lie}(\mathfrak{g}_M).\]
\end{theorem}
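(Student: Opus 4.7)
The plan is to reduce the statement to a computation of factorization homology, identifying the configuration spaces of $M$ with the factorization homology of a specific rational $E_d$-algebra and then evaluating this via Chevalley--Eilenberg chains of an associated Lie algebra. This is the strategy underlying \cite{Knudsen}, and I would organize it into three steps.

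First, I would realize $\bigoplus_{n\geq 0} C_*(B_n(M);\mathbb{Q})$ as the factorization homology $\int_M A$ for a specific $E_d$-algebra $A$, following Ayala--Francis. Using the rational formality of the little $d$-disks operad (Kontsevich/Tamarkin), for even $d$ this algebra can be taken to be the free Poisson${}_d$-algebra on a generator $v$ of bidegree $(1,d-1)$; by the Jacobi relation and the evenness hypothesis, the only nonzero iterated brackets of $v$ with itself are $v$ and $[v,v]$, so the underlying bigraded vector space of $A$ is spanned by these two classes.

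Second, I would apply the general formula \[\int_M \mathrm{Sym}_{\mathbb{P}_d}(V) \simeq \mathrm{CE}(\mathfrak{g}_{M,V}),\] where $\mathfrak{g}_{M,V}$ is the Lie algebra $H_c^*(M;\mathbb{Q}^w)\otimes V \oplus H_c^*(M;\mathbb{Q})\otimes [V,V]$ equipped with the bracket induced by the twisted cup product. Specializing to $V = \mathbb{Q} v$ recovers precisely $\mathfrak{g}_M$ and the bracket in the statement, and the isomorphism of the theorem follows on passing to homology.

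Third, I would prove the formula of Step 2 by a local-to-global argument. Both sides satisfy a codescent property on the site of open subsets of $M$: factorization homology by its excision axiom, and $U \mapsto \mathrm{CE}(\mathfrak{g}_{U,V})$ because $H_c^*$ is a cosheaf and the functor $\mathrm{Sym}$ is polynomial and preserves the relevant colimits. On a disk, $H_c^*(\mathbb{R}^d;\mathbb{Q}^w) \cong \mathbb{Q}$ is concentrated in top degree, $\mathfrak{g}_{\mathbb{R}^d,V}$ reduces to a two-step nilpotent Lie algebra, and both sides can be computed explicitly and matched via the Koszul duality between shifted Lie algebras and $E_d$-algebras.

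The hard part will be the sign and orientation bookkeeping in Step 2, specifically verifying that the sign $(-1)^{d(\beta)(d-1)}$ in the bracket definition is exactly the one produced by Koszul duality together with Poincaré--Lefschetz duality $H_c^*(M;\mathbb{Q}^w) \cong H_{d-*}(M;\mathbb{Q})$. The orientation sheaf $\mathbb{Q}^w$ is forced on the $v$-summand by this duality, since a class in $H_c^*$ paired with the fundamental class must transform like a top form, while the even parity of $d$ is essential for $[v,v]$ to survive in homology and hence contribute the second summand of $\mathfrak{g}_M$.
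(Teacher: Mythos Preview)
The paper does not prove this theorem; it is quoted from \cite{Knudsen} and used as a black box, so there is no proof here to compare against.

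Your outline is a plausible high-level sketch of the argument in that reference, but a couple of points are slightly off relative to what is actually done there. Knudsen does not proceed via formality of the $E_d$-operad and $\mathbb{P}_d$-algebras; instead he uses the higher enveloping algebra functor $U_d$ from shifted Lie algebras to $E_d$-algebras, identifies the free $E_d$-algebra on a class in bidegree $(1,0)$ with $U_d$ of the free Lie algebra on a class in bidegree $(1,d-1)$, and then proves a general formula $\int_M U_d(\mathfrak{g}) \simeq \mathrm{CE}(C_c^*(M;\mathfrak{g}))$ via the local-to-global (excision) argument you describe in Step~3. Specializing and taking homology (using that the free Lie algebra in question is formal, with homology spanned by $v$ and $[v,v]$) then yields $\mathfrak{g}_M$. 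Your formality route can be made to work as an alternative argument, but the formula you wrote in Step~2 is not quite the one established in the reference, and your sign/orientation bookkeeping in Step~3 would need to be carried out carefully rather than asserted.
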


Through Theorem \ref{thm:lie homology}, stability phenomena in Lie algebra homology become stability phenomena in the homology of configuration spaces. As a matter of notation, we write $\mathfrak{h}_M=H_c^*(M;\mathbb{Q})\otimes [v,v]$ and $\mathfrak{k}_M=H_c^*(M;\mathbb{Q}^w)\otimes v$, considered as Abelian Lie algebras. Thus, we have the exact transit $\mathfrak{h}_M\to \mathfrak{g}_M\to \mathfrak{k}_M$. We observe that, for degree $j$ cohomology classes $\alpha \in H^j_c(M, \bbQ)$ and $\alpha' \in H^j_c(M, \bbQ^w)$, as elements of $\mathfrak{g}_M[1]$, we have \begin{align*}
d(\alpha'\otimes v)=m(\alpha'\otimes v)&=d-j\\
d(\alpha\otimes[v,v])&=2d-j-1\\
m(\alpha\otimes [v,v])&=d-\frac{j+1}{2}.
\end{align*} Note that $\mathfrak{g}_M$ satisfies Assumption \ref{assumption:lie}.

As a first example, we give a proof of classical homological stability.

\begin{proof}[Proof of Theorem \ref{thm:homological stability}]
Since $M$ is a manifold, we have $0\leq j\leq d$ in the equations above, so we have the inequalities \begin{align*}
m_{\min}(\mathfrak{h}_M[1])&\geq\frac{d-1}{2}>0\\
m_{\min}(\mathfrak{k}_M[1])&\geq0
\end{align*} (we use that $d>1$). We now invoke the minimal slope version of Proposition \ref{prop:lie stability} with $C=0$. Observing that $\mathfrak{k}_M[1]_0^0\cong H_c^d(M;\mathbb{Q}^w)\cong H_0(M;\mathbb{Q})\neq0$ by Poincar\'{e} duality, the degree bound on the resulting growth quasi-polynomial follows. Since $\mathfrak{k}_M$ is concentrated in weight $1$, this quasi-polynomial is in fact a polynomial.
\end{proof}

We turn now to extremal stability, our main result.

\begin{proof}[Proof of Theorem \ref{thm:extremal stability}]
Assume first that $M$ has no compact component. By Poincar\'{e} duality, we have $H_c^0(M;\mathbb{Q}^w)\cong H_d(M;\mathbb{Q})=0$. Since we likewise have $H_c^0(M;\mathbb{Q})\cong \widetilde{H}^0(M^+;\mathbb{Q})=0$, it follows that $0<j\leq d$ in the above equations, whence
\begin{align*}
m_{\max}(\mathfrak{h}_M[1])&\leq d-1\\
m_{\max}(\mathfrak{k}_M[1])&\leq d-1
\end{align*}
We observe that $d(\alpha\otimes v)$ is even if and only if $j$ is even, so $m_{\max}(\mathfrak{k}_M[1]^0)< d-1$; thus, we may apply Proposition \ref{prop:lie extremal} with $C=d-1$. The degree bound follows as before upon noting that $\mathfrak{h}[1]_{d-1}^0\cong H_c^1(M;\mathbb{Q})\cong H_{d-1}(M;\mathbb{Q}^w)$ by Poincar\'{e} duality, and the period estimate follows from the fact that $\mathfrak{h}_M$ is concentrated in weight $2$.

In general, write $\{M_i\}_{i\in I}$ for the (finite) set of compact components of $M$ and $\dot{M}$ for the manifold obtained by removing a single point from each. We have the exact sequence of Lie algebras \[\mathfrak{g}_{\dot{M}}\to \mathfrak{g}_M\to \prod_{i\in I} \mathfrak{l}_{i},\] where $\mathfrak{l}_{i}$ is free on one generator of bidegree $(1,d-1)$ or $(2,2d-2)$ according to whether $M_i$ is orientable. This sequence splits, expressing $\mathfrak{g}_M$ as an iterated semidirect product of $\mathfrak{g}_{\dot{M}}$ with free Lie algebras. Since the $\mathfrak{l}_i$ centralize $\mathfrak{h}_M=H_c^*(M;\mathbb{Q})\otimes[v,v]$, the claim follows from Corollary \ref{cor:semidirect product} and Remark \ref{remark:iterated semidirect product}.
\end{proof}

We note the following structural statement, which we have established in the proof of Theorem \ref{thm:extremal stability}.

\begin{theorem}\label{thm:structural}
		Let $M$ be as in Theorem \ref{thm:extremal stability}. Then every ray of slope $d-1$ in $\bigoplus_{n\geq0} H_*(B_n(M))$ is a finitely generated module over $\Sym(H_{d-1}(M; \bbQ^w) \langle 2 \rangle [2d-2])$ (the free commutative algebra on $H_{d-1}(M, \bbQ^w)$, with generators in bi-degree $(2,2d-2)$).    If  $H_d(M; \bbQ) = 0$, then these  modules are free.  
\end{theorem}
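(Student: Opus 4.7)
The plan is to translate both assertions into statements about Lie-algebra homology via Theorem~\ref{thm:lie homology}, read off finite generation from the proof of Theorem~\ref{thm:extremal stability}, and then, under the vanishing hypothesis, construct a split transit so as to deduce freeness from Corollary~\ref{thm:free}.

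For the finite-generation assertion, I would observe that the proof of Theorem~\ref{thm:extremal stability} already applies Proposition~\ref{prop:lie extremal} (or, for compact components, Corollary~\ref{cor:semidirect product}) to show that every ray of slope $d-1$ in $H_*^{\mathrm{Lie}}(\mathfrak{g}_M)$ is finitely generated over $\Sym(\mathfrak{h}_M[1]^0_{d-1})$. The bidegree $(2,2d-2)$ of $[v,v]$ in $\mathfrak{g}_M[1]$, combined with Poincar\'e duality $H_c^1(M;\mathbb{Q}) \cong H_{d-1}(M;\mathbb{Q}^w)$, identifies $\mathfrak{h}_M[1]^0_{d-1}$ with $H_{d-1}(M;\mathbb{Q}^w)\langle 2\rangle[2d-2]$, and Theorem~\ref{thm:lie homology} translates the weight-$n$ component to $H_*(B_n(M);\mathbb{Q})$.

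For the freeness assertion under the hypothesis $H_d(M;\mathbb{Q}) = 0$, which by Poincar\'e duality is equivalent to $H_c^0(M;\mathbb{Q}^w) = 0$, the core of the plan is to construct a split transit whose slope-$(d-1)$ even-degree part of $\mathfrak{h}[1]$ is precisely $\mathfrak{h}_M[1]^0_{d-1}$. Because the bracket of $\mathfrak{g}_M$ is computed by the twisted cup product $H_c^*(M;\mathbb{Q}^w)^{\otimes 2} \to H_c^*(M;\mathbb{Q})$, its image lies in cohomological degrees $\geq 2$ under our hypothesis. Hence the central subalgebra $\mathfrak{h} := H_c^1(M;\mathbb{Q}) \otimes [v,v] \subseteq \mathfrak{h}_M$---whose shift realizes $\mathfrak{h}_M[1]^0_{d-1}$---meets the derived subalgebra $[\mathfrak{g}_M,\mathfrak{g}_M]$ trivially. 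Splitting the abelianization $\mathfrak{a}$ as $\mathrm{im}(\mathfrak{h}) \oplus \mathfrak{a}'$ (possible since $\mathfrak{a}$ is Abelian) and projecting yields a Lie-algebra quotient $g : \mathfrak{g}_M \twoheadrightarrow \mathfrak{k} := \mathrm{im}(\mathfrak{h}) \cong \mathfrak{h}$ with $g \circ f = \mathrm{id}_{\mathfrak{h}}$, where $f$ is the inclusion. This is a split transit, whose transit algebra is the Weyl algebra on $\mathfrak{h}[1]$ by Example~\ref{example:split algebra}.

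To conclude via Corollary~\ref{thm:free} with $C = d-1$, the slope inequalities must be verified. The bound $m_{\max}(\mathfrak{h}[1]^0) = d-1$ is immediate. For the cokernel: in the $v$-summand, even degree forces $i$ even, and $H_c^0(M;\mathbb{Q}^w) = 0$ promotes this to $i \geq 2$, yielding slope $\leq d-2$; in the $[v,v]$-summand, even degree forces $i$ odd, so either $i \geq 3$ and slope $\leq d-2$, or $i = 0$, in which case the generator arising from a compact non-orientable component contributes in \emph{odd} degree and drops out of the even part. Thus $m_{\max}(\mathrm{coker}\,f[1]^0) < d-1$, and Corollary~\ref{thm:free} delivers freeness. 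The main obstacle is the construction of the split transit itself: arranging that the central-subalgebra piece survives the abelianization requires exactly the vanishing hypothesis, which is the reason freeness fails in general.
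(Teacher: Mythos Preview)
Your proposal is correct and follows essentially the same route as the paper. For finite generation you both read it off the proof of Theorem~\ref{thm:extremal stability} and identify $\mathfrak{h}_M[1]^0_{d-1}\cong H_{d-1}(M;\mathbb{Q}^w)\langle 2\rangle[2d-2]$ via Poincar\'e duality. For freeness you both construct a split transit with $\mathfrak{h}=H_c^1(M;\mathbb{Q})\otimes[v,v]$ and invoke the Weyl-algebra argument underlying Corollary~\ref{thm:free}; the only difference is that the paper writes down an explicit ideal $\mathfrak{j}_M=\bigoplus_{i>0}H_c^i(M;\mathbb{Q}^w)\otimes v\oplus\bigoplus_{i\neq1}H_c^i(M;\mathbb{Q})\otimes[v,v]$ and quotients by it, whereas you argue abstractly that $\mathfrak{h}\cap[\mathfrak{g}_M,\mathfrak{g}_M]=0$ and split the abelianization. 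These produce the same quotient (under $H_c^0(M;\mathbb{Q}^w)=0$ one has $\mathfrak{g}_M/\mathfrak{j}_M\cong\mathfrak{h}$), so the two arguments are interchangeable.

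One small expository glitch: in your slope check for the $[v,v]$-summand of $\mathrm{coker}\,f$, after correctly noting that even degree forces $i$ odd you then list ``$i=0$'' as a case. This is incoherent (zero is not odd) and the aside about compact non-orientable components is unnecessary: once $i$ is odd and $i\neq1$, you have $i\geq3$ and slope $\leq d-2$, full stop. The $H_c^0(M;\mathbb{Q})\otimes[v,v]$ term sits in odd degree $2d-1$ and never enters $\mathrm{coker}\,f[1]^0$ in the first place.
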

\begin{proof}
	The proof of Theorem \ref{thm:extremal stability} uses the central subalgebra  $H^1_c(M) \otimes [v,v] \subseteq \fh_M \subseteq \fg_M $.   By Poincar\'e duality $ H^1_c(M) \iso H_{d-1}(M; \bbQ^w)$.   It is shown that every slope $d-1$ ray is a finitely generated $\Sym(H^1_c(M)\langle 2 \rangle [1])$ module. When $H^0_c(M; \bbQ^w) \iso H_{d}(M ;\bbQ) = 0,$ we have that  $$\fj_M := \bigoplus_{j>0}H^j_c(M; \bbQ^w) \otimes v  \oplus \bigoplus_{j \neq 1}  H^j_c(M; \bbQ) \otimes [v,v]$$  is an ideal--- in fact $[\fg_M, \fg_M] \subseteq \fj_M$  since any element of $\fg_M$ of the form $\alpha \otimes [v,v]$  is central,  and any pair of elements $\beta_1 \otimes v, \beta_2 \otimes v \in \fg_M$ with $d(\beta_i) > 0$  satisfies $[\beta_1 \otimes v, \beta_2 \otimes v] \in \fj_M$.

 Further,  $H^1_c(M) \otimes [v,v] \to \fg_M \to \fg_M/\fj_M$  is a split transit.  Hence as in the proof of Corollary \ref{thm:free},  the action of $\Sym(H_{d-1}(M; \bbQ^w)[1]\langle 2 \rangle)$ extends to an action of the Weyl algebra.  Therefore every ray of slope $d-1$  is a finitely generated free module.  
\end{proof}

\begin{example}
The assumption that $H_d(M; \bbQ) = 0$ is needed to ensure that the modules are free. For example, let $\Sigma$ be a compact orientable surface of genus $1$. If $\bigoplus_{n \geq 0} H_n(B_n(\Sigma);\bbQ) $ were free, then the sequence $\dim H_{2n}(B_{2n}(\Sigma);\bbQ)$ would agree with a polynomial of degree $1$. However, it follows from \cite[Corollary 4.6]{DK} that $\dim H_{0}(B_{0}(\Sigma);\bbQ)=1$, $\dim H_{2}(B_{2}(\Sigma);\bbQ)=1$, but $\dim H_{4}(B_{4}(\Sigma);\bbQ)=4$.

\end{example}

\subsection{Vanishing and a loose end}

If $H_{d-1}(M;\mathbb{Q})=0$, then the conclusion of Theorem \ref{thm:extremal stability} is that $H_{\nu_n-i}(B_n(M);\mathbb{Q})=0$ for $n$ large, which is to say that every ray of slope $d-1$ eventually vanishes. In such a situation, the question of extremal stability should concern rays of smaller slope. For simplicity, we assume that $M$ is orientable.

\begin{theorem}\label{thm:vanishing}
Let $M$ be an orientable manifold of even dimension $d\geq 2$. Fix $r\geq3$, and suppose that $H_{d-s}(M;\mathbb{Q})=0$ for $0< s<r$. Fix $i \in \bbZ$. For $n$ sufficiently large, the function $n\mapsto \dim H_{ n ( d - 1 - \lfloor\frac{r}{2}\rfloor)-i}(B_n(M);\mathbb{Q})$
is equal to a quasi-polynomial in $n$ of period dividing two and degree at most ${\dim H_{d- r}(M;\mathbb{Q})-1}$ if $r$ is odd and ${\dim H_{d- r-1}(M;\mathbb{Q})-1}$  if $r$ is even.  
\end{theorem}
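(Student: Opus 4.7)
The plan is to apply Proposition~\ref{prop:lie extremal} to $\mathfrak{g}_M$ at the slope $C := d-1-\lfloor r/2\rfloor$, taking the central subalgebra $f\colon \mathfrak{h}_M := H_c^*(M;\mathbb{Q})\otimes[v,v]\hookrightarrow \mathfrak{g}_M$, whose cokernel is $\mathfrak{k}_M = H_c^*(M;\mathbb{Q}^w)\otimes v$. Centrality of $\mathfrak{h}_M$ is immediate because $[v,v]$ brackets to zero with every element of $\mathfrak{g}_M$, and via Theorem~\ref{thm:lie homology} the rays of slope $C$ in $H_*^{\mathrm{Lie}}(\mathfrak{g}_M)$ are exactly the sequences $n\mapsto \dim H_{i+n(d-1-\lfloor r/2\rfloor)}(B_n(M);\mathbb{Q})$ appearing in the theorem.

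First I would treat the case where $M$ has no compact component, so orientability gives $H_c^0(M;\mathbb{Q}) = H_c^0(M;\mathbb{Q}^w) = 0$, and combined with the vanishing hypothesis $H_c^s(M) = 0$ for $0 < s < r$, the smallest $i\geq 0$ with $H_c^i(M)\neq 0$ is at least $r$. A brief parity case analysis of the slope formulas $m(\alpha\otimes[v,v]) = d-(i+1)/2$ for $i=d(\alpha)$ odd and $m(\alpha\otimes v) = d-i$ for $i$ even then yields both $m_{\max}(\mathfrak{h}_M[1]^0)\leq C$ (realized when $d(\alpha)=r$ for $r$ odd or $d(\alpha)=r+1$ for $r$ even) and $m_{\max}(\mathfrak{k}_M[1]^0) < C$, where the strict inequality uses $r\geq 3$. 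Proposition~\ref{prop:lie extremal} then delivers the desired quasi-polynomial; its degree is bounded by $\dim \mathfrak{h}_M[1]^0_C - 1$, which Poincar\'e duality identifies with $\dim H_{d-r}(M;\mathbb{Q}^w)-1$ when $r$ is odd and $\dim H_{d-r-1}(M;\mathbb{Q}^w)-1$ when $r$ is even, while the period divides $\mathrm{lcm}(\mathfrak{h}_M[1]_C^0) = 2$ because $[v,v]$ carries weight $2$.

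The general case will follow by induction on the number of compact components of $M$, mirroring the proof of Theorem~\ref{thm:extremal stability}: removing a point from a compact (orientable) component yields a split sequence $\mathfrak{g}_{\dot M}\to \mathfrak{g}_M\to \mathfrak{l}$ with $\mathfrak{l}$ free on a generator of bidegree $(1,d-1)$; since $\mathfrak{h}_M$ is central, it is automatically centralized by $\mathfrak{l}$, so Corollary~\ref{cor:semidirect product} together with Remark~\ref{remark:iterated semidirect product} propagates the conclusion from $\dot M$ to $M$.

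The main obstacle is the strict inequality $m_{\max}(\mathfrak{k}_M[1]^0) < C$ in the base case: the classes $\alpha\otimes v$ cannot be absorbed into the central subalgebra because they do not bracket to zero with one another, so one must rely entirely on the vanishing hypothesis to push the largest even-$i$ slope of $\mathfrak{k}_M[1]^0$ strictly below $C$. This is tight---for $r=2$ the relevant slope $d-2$ would coincide with $C$, and Proposition~\ref{prop:lie extremal} would fail to apply---so the assumption $r\geq 3$ is precisely what forces strictness in both parity cases.
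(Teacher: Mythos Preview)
Your proposal is correct and follows essentially the same route as the paper: reduce to the non-compact case, apply Proposition~\ref{prop:lie extremal} with $C=d-1-\lfloor r/2\rfloor$ and central subalgebra $\mathfrak{h}_M$, carry out the parity analysis on $\mathfrak{h}_M[1]^0$ to get $m_{\max}(\mathfrak{h}_M[1]^0)\leq C$ (splitting into the cases $r$ odd and $r$ even), verify the strict inequality for $\mathfrak{k}_M[1]$ using $r\geq 3$, identify the degree bound via Poincar\'e duality, and then handle compact components by the same semidirect-product induction as in Theorem~\ref{thm:extremal stability}. The only cosmetic difference is that the paper bounds all of $m_{\max}(\mathfrak{k}_M[1])\leq d-r$ rather than just the even part, which is slightly quicker but equivalent for the purpose of invoking Proposition~\ref{prop:lie extremal}.
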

\begin{proof} The proof mirrors that of Theorem \ref{thm:extremal stability}, with the reduction to the non-compact case proceeding unchanged. We aim to apply Proposition \ref{prop:lie extremal} with $C=d-1-\lfloor\frac{r}{2}\rfloor$. Since $M$ is orientable, twisted and untwisted cohomology coincide, and $H_c^s(M;\mathbb{Q})\cong H_{d-s}(M;\mathbb{Q})=0$ for $0\leq s< r$. Thus, we have
\begin{align*}
m_{\max}(\mathfrak{h}_M[1])&\leq d-\frac{r+1}{2}\\
m_{\max}(\mathfrak{k}_M[1])&\leq d-r<C,
\end{align*}
where we use that $r\geq 3$. We observe that $d(\alpha\otimes[v,v])$ is even if and only if $\alpha$ has odd degree, so \[m_{\max}(\mathfrak{h}_M[1]^0)\leq \begin{cases}
d-\frac{r+1}{2}&\quad r\text{ odd}\\
d-\frac{r+2}{2}&\quad r\text{ even},
\end{cases}\] as desired. As before, the claim follows from Poincar\'{e} duality.
\end{proof}

In the case $r=2$, the situation is unclear, and we ask the following.

\begin{question}\label{quest:requals1}
Suppose that $H_{d-1}(M;\mathbb{Q}) = 0$. For $i\in\bbZ$, is the Hilbert function $$n \mapsto \dim H_{{n(d-2)-i}}(B_n(M);\mathbb{Q})$$ 
eventually a quasi-polynomial?
\end{question}

This question has an affirmative answer under the further assumption that $H_{d-2}(M;\mathbb{Q})=0$, by Theorem \ref{thm:vanishing}, and it is not hard to show that the same is true when $H_{d-3}(M;\mathbb{Q})=0$  using Proposition \ref{prop:lie stability}.

\begin{remark}
Since the initial draft of this paper appeared, Yameen has answered Question \ref{quest:requals1} in the affirmative in the case of $M=\mathbb{CP}^m$ \cite{Yameen}. 
\end{remark}

\subsection{The odd dimensional case}\label{section:odd case} We comment briefly on the case of $d$ odd, which is encompassed implicitly by the results of \cite{BCT}. 

In odd dimensions, the corresponding Lie algebra provided by \cite{Knudsen} is the Abelian Lie algebra $H_c^*(M;\mathbb{Q}^w)\otimes v$, so \[\bigoplus_{n\geq0}H_*(B_n(M);\mathbb{Q})\cong \Sym(H_*(M;\mathbb{Q}))\] by Poincar\'{e} duality (as was originally proven in \cite{BCT}). It follows easily that every ray of slope zero is free and finitely generated over $\Sym(H_0(M))$, so $\dim H_i(B_n(M);\mathbb{Q})$ is a polynomial in $n$ of exact degree $\dim H_0(M;\mathbb{Q})-1$. Thus, Theorem \ref{thm:homological stability} holds in this case. As for Theorem \ref{thm:extremal stability}, the corresponding statement is that every ray of slope $d-1$ is free and finitely generated over $\Sym(H_{d-1}(M;\mathbb{Q}))$, so $\dim H_{\nu_n-i}(B_n(M);\mathbb{Q})$ is a polynomial in $n$ of exact degree $\dim H_{d-1}(B_n(M);\mathbb{Q})-1$.

\bibliographystyle{amsalpha}
\bibliography{Extreme}

\end{document}